\documentclass[10pt,electronic]{amsart}        
\usepackage[applemac]{inputenc}
\usepackage[T1]{fontenc}
\usepackage[small,it]{caption}
\usepackage[english]{babel}
\usepackage[small,nohug]{diagrams}
\usepackage{url}
\usepackage[hmargin=3.7cm,vmargin=3.5cm]{geometry} 
\usepackage{graphicx}                         
\usepackage{amsmath}
\usepackage{amsthm}                      
\usepackage{amssymb}
\usepackage{amscd}     
\usepackage{mathrsfs}
\usepackage{stmaryrd}
\usepackage{enumerate}

\usepackage[isbn=false,doi=false,eprint=false,url=false,style=authoryear,dashed=false]{biblatex}
\makeatletter
\newrobustcmd*{\parentexttrack}[1]{%
  \begingroup
  \blx@blxinit
  \blx@setsfcodes
  \blx@bibopenparen#1\blx@bibcloseparen
  \endgroup}
\AtEveryCite{%
  }
\makeatother
\renewcommand{\cite}{\parencite}
\DeclareNameAlias{sortname}{last-first}
\renewbibmacro*{volume+number+eid}{%
  \printfield{volume}%
  \setunit*{\addnbspace}
  \printfield{number}%
  \setunit{\addcomma\space}%
  \printfield{eid}}
 \DeclareFieldFormat[article]{volume}{\textbf{#1}}
\DeclareFieldFormat[article]{number}{{\mkbibparens{#1}},}
\DeclareFieldFormat{pages}{#1 pages}
\DeclareFieldFormat
  [article,inbook,incollection,inproceedings,patent,thesis,unpublished]
  {pages}{#1}
\renewbibmacro{in:}{}
\theoremstyle{plain}                                                           
\newtheorem{thm}{Theorem}[section]
\newtheorem{lem}[thm]{Lemma}
\newtheorem{prop}[thm]{Proposition}

\theoremstyle{definition}

\newtheorem{rem}[thm]{Remark}

\parindent=0pt 
\parskip=10pt  

\DeclareMathOperator{\SL}{SL}
\DeclareMathOperator{\GL}{GL}
\DeclareMathOperator{\MHS}{MHS}
\DeclareMathOperator{\Sym}{Sym}                  
\DeclareMathOperator{\Sp}{Sp}
\newcommand{\gr}{\mathfrak{gr}}
\newcommand{\ct}{\mathsf{ct}}
\newcommand{\ud}{\,\mathrm d}
\newcommand{\rt}{\mathsf{rt}}
\newcommand{\field}[1]{\ensuremath{\mathbf{#1}}}
\newcommand{\Q}{\ensuremath{\field{Q}}}        
\newcommand{\C}{\ensuremath{\field{C}}}
\newcommand{\sym}{\ensuremath{\mathbb{S}}}
\newcommand{\Z}{\ensuremath{\field{Z}}}

\newcommand{\A}{\mathcal{A}}
\newcommand{\M}{\mathcal{M}}
\newcommand{\MM}{\overline{\mathcal{M}}}
\renewcommand{\AA}{\mathbf{A}}

\newcommand{\G}{\mathbb G}

\newcommand{\even}{\mathrm{even}}

\newcommand{\PH}{{P\! H}}

\newcommand{\V}{\mathbb V}
\newcommand{\R}{\mathrm{R}}
\newcommand{\RH}{{R\hspace{-0.5pt} H}}
\newcommand{\pH}{{}^\mathfrak p \mathcal H}
\newcommand{\sm}{\mathrm{sm}}

\newcommand{\MHM}{\mathsf{MHM}}

\title{Tautological rings of spaces of pointed genus two curves of compact type}
\keywords{tautological ring, Faber conjectures, moduli of curves, Gromov--Witten theory, cohomology of moduli spaces}
\subjclass[2010]{14H10, 14C17, 32S60, 14N35, 14D07, 55R55}
\author{Dan Petersen}
\thanks{}
\address{}

\addbibresource{../database}

\begin{document} 
 \maketitle   
 
 \begin{abstract}
 We prove that the tautological ring of $\M_{2,n}^\ct$, the moduli space of $n$-pointed genus two curves of compact type, does not have Poincar\'e duality for any $n \geq 8$. This result is obtained via a more general study of the cohomology groups of $\M_{2,n}^\ct$. We explain how the cohomology can be decomposed into pieces corresponding to different local systems and how the tautological cohomology can be identified within this decomposition. Our results allow the computation of $H^k(\M_{2,n}^\ct)$ for any $k$ and $n$ considered both as $\sym_n$-representation and as mixed Hodge structure/$\ell$-adic Galois representation considered up to semi-simplification. A consequence of our results is also that all even cohomology of $\MM_{2,n}$ is tautological for $n < 20$, and that the tautological ring of $\MM_{2,n}$ fails to have Poincar\'e duality for all $n \geq 20$. This improves and simplifies results of the author and Orsola Tommasi.
 \end{abstract}

\section{Introduction}

The \emph{tautological classes} on the moduli spaces $\MM_{g,n}$ of $n$-pointed stable genus $g$ curves can be defined, following \cite{relativemaps}, to be the algebraic cycle classes of `geometric origin'. Specifically, we declare the fundamental classes of all spaces $\MM_{g,n}$ to be tautological, that the pushforward of tautological classes along the natural maps
$$ \MM_{g,n+1}\times \MM_{g',n'+1} \to \MM_{g+g',n+n'}, \qquad \MM_{g-1,n+2} \to \MM_{g,n}, \qquad \MM_{g,n+1} \to \MM_{g,n}$$
should be tautological, and that intersections of tautological classes are tautological. In this way we obtain a collection of subrings $R^\bullet(\MM_{g,n}) \subset A^\bullet(\MM_{g,n})$, the tautological rings. 

We can also consider the images of the tautological rings in cohomology, denoted $\RH^\bullet(\MM_{g,n})$. Finally, if $U \subset \MM_{g,n}$ is Zariski open, then we let $R^\bullet(U) = \mathrm{Im}\left(R^\bullet(\MM_{g,n}) \to A^\bullet(U)\right)$, and we define similarly $\RH^\bullet(U)$.

Particularly interesting (for us) are the Zariski open sets $\M_{g,n}^\rt \subset \M_{g,n}^\ct \subset \MM_{g,n}$ parametrizing curves with \emph{rational tails} and of \emph{compact type}, respectively. A stable $n$-pointed genus $g$ curves is of compact type if its dual graph is a tree; equivalently, if its jacobian is an abelian variety. It has \emph{rational tails} if one of its components has geometric genus $g$, implying that all other components are trees (or `tails') of rational curves attached to this genus $g$ component.

Faber and Pandharipande \cite{faberconjectures,faberjapan,pandharipandequestions} proposed a uniform conjectural description of the tautological rings $R^\bullet(\M_{g,n}^\rt)$, $R^\bullet(\M_{g,n}^\ct)$ and $R^\bullet(\MM_{g,n})$. These conjectures naturally split into several smaller pieces. First were the \emph{vanishing} and \emph{socle} conjectures. These assert that 
\begin{align*}
&R^{g-2+n - \delta_{0,g}}(\M_{g,n}^\rt) \cong \Q,  & R^k(\M_{g,n}^\rt) =0 \text{ for } k > g-2+n - \delta_{0,g} , \\
&R^{2g-3+n}(\M_{g,n}^\ct) \cong \Q, & R^k(\M_{g,n}^\ct) =0 \text{ for } k > 2g-3+n, \\
&R^{3g-3+n}(\MM_{g,n}) \cong \Q&
\end{align*}
(and obviously, $R^k(\MM_{g,n}) =0 \text{ for } k > 3g-3+n$). Here $\delta_{0,g}$ is the Kronecker delta. Given these statements, one can (after choosing a generator for the top degree) describe the pairing into the top degree in terms of proportionalities. On $\MM_{g,n}$ these top degree intersection numbers are all determined by Witten's conjecture (Kontsevich's theorem). The second part of the conjectures were explicit expressions determining the top intersections also in the rational tails and compact type cases: the $\lambda_g\lambda_{g-1}$-conjecture and the $\lambda_g$-conjecture, respectively. 

The vanishing, socle and intersection number conjectures are now all theorems. This represents work of a large number of people, and all of the statements now have several different proofs, enlightening in their own way. See the survey \cite{pcmifaber} and the detailed references therein.

However, the final part of the conjectures is now known to be false in general. The \emph{perfect pairing} conjecture proposed that the pairing into the top degree is always perfect, so that the tautological rings enjoy Poincar\'e duality. A different way of stating this is in terms of the relations between the generators of the tautological rings: every potential relation between tautological classes that is consistent with the top degree pairing is actually a true relation. In \cite{petersentommasi} we showed that this conjecture fails on $\MM_{2,n}$. Before this, computer calculations had been used to find examples where it seems likely that the perfect pairing conjecture fails, also in the rational tails and compact type cases, see \cite{pcmifaber,yin,pixtonthesis}.

The main result of this paper is that the conjecture fails also on $\M_{2,n}^\ct$. In fact we observe  failure much sooner --- the tautological cohomology ring of $\MM_{2,n}$ fails to have Poincar\'e duality for the first time when $n=20$ (see Subsection \ref{stablecounterexample}), but in the compact type case the pairing fails to be perfect already when $n=8$. Even though computer assisted computations of tautological rings for small $g$ and $n$ have not yet gotten as far as $\M_{2,8}^\ct$, it is not {inconceivable} that one could actually determine the intersection matrices in this case with enough computing power and a clever implementation. Thus one could for instance hope to explicitly write down a nonzero tautological class that pairs trivially with all tautological classes in opposite degree. (The proof given here seems not explicit enough to produce such a class). Doing something similar for $\MM_{2,20}$ is utterly doomed to fail.

The basic strategy in \cite{petersentommasi} was to study instead the tautological cohomology ring $\RH^\bullet(\MM_{2,n})$, and to prove that its Betti numbers are not symmetric about the middle degree. This certainly implies that the tautological cohomology ring can not have a perfect pairing, and then also the tautological ring itself. This is what we do in this paper, too. 

The  results will follow from a more general study of the cohomology of $\M_{2,n}^\ct$. We approach the cohomology of $\M_{2,n}^\ct$ via the Leray spectral sequence for $f \colon \M_{2,n}^\ct \to \M_2^\ct$. Our first result, Theorem \ref{symmetrythm}, is that the Leray spectral sequence degenerates, and therefore that the cohomology of $\M_{2,n}^\ct$ can be expressed in terms of the cohomology of local systems on $\M_2^\ct$ and local systems supported on the boundary. Moreover, after the results in \cite{localsystemsA2} we actually know the cohomology of all these local systems in any degree, together with their mixed Hodge structure up to semi-simplification. This allows us to obtain very detailed information about the cohomology of $\M_{2,n}^\ct$: we can compute $H^k(\M_{2,n}^\ct)$ for any $k$ and $n$ considered both as $\sym_n$-representation and as mixed Hodge structure/$\ell$-adic Galois representation (considered up to semi-simplification). This has been implemented on a computer, in a way described in  Section \ref{computer}.

The results described in the preceding paragraph provide a decomposition of the cohomology of $\M_{2,n}^\ct$ into pieces corresponding to different local systems. Theorem \ref{mainthm} gives a description of  the subspace 
$$ \RH^\bullet(\M_{2,n}^\ct) \subset H^\bullet(\M_{2,n}^\ct)$$ 
in terms of this decomposition. Moreover, Poincar\'e--Verdier duality applied to the morphism $f \colon \M_{2,n}^\ct \to \M_2^\ct$ implies (Theorem \ref{counterexample}) that $\RH^\bullet(\M_{2,n}^\ct)$ can be written as a part that is symmetric about the middle degree (corresponding to the trivial local systems on $\M_2^\ct$ and on the boundary) and a part that is symmetric about a \emph{different} degree. The latter part is nonzero for any $n \geq 8$. 

The non-symmetric part of $\RH^\bullet(\M_{2,n}^\ct)$ is given by the image of a certain Gysin map. This Gysin map was studied previously in \cite{petersentommasi}, where we conjectured that it is never zero. In Section \ref{gysinsection} we present a proof of this conjecture. This non-vanishing result implies also that all even cohomology of $\MM_{2,n}$ is tautological for $n<20$, and that the tautological ring of $\MM_{2,n}$ fails to have perfect pairing for all $n \geq 20$ (Theorem \ref{stablethm}). Theorem \ref{stablethm} improves upon the results of \cite{petersentommasi}, with a simpler proof.

Our results imply in fact that the Betti numbers of $\RH^\bullet(\M_{2,n}^\ct)$ are always bigger above the middle degree than below it for $n \geq 8$, and similarly that the Betti numbers of $\RH^\bullet(\MM_{2,20})$ are always bigger above the middle for $n \geq 20$. 
It would be interesting if this phenomenon, that the tautological ring is bigger above the middle degree than below it, always were true. We remark that it is always true in cohomology on the space $\MM_{g,n}$. Indeed, according to the hard Lefschetz theorem, there is an isomorphism 
$$ H^{3g-3+n-q}(\MM_{g,n}) \to H^{3g-3+n+q}(\MM_{g,n})$$
defined by iterated multiplication by an ample class $\omega$. Now note that $\omega$ must be a tautological class (since it is a divisor), so multiplication by $\omega^q$ maps tautological classes to tautological classes. Thus the tautological cohomology below the middle degree injects into that above it. Perhaps a similar hard Lefschetz principle for tautological classes exists also on the spaces $\M_{g,n}^\rt$ and $\M_{g,n}^\ct$, and/or in the Chow ring. 

\subsection{Conventions} We work throughout over the complex numbers and use the language of mixed Hodge theory, but our arguments would work as well in \'etale cohomology. Cohomology groups and Chow groups are taken with $\Q$-coefficients unless stated otherwise.

\subsection{Acknowledgements}

This work was carried out in the group of Pandharipande at ETH Z\"urich, supported by grant ERC-2012-AdG-320368-MCSK. I am grateful to Rahul Pandharipande for his suggestions and his interest in this work. Conversations with Francesco Lemma and correspondence with J\"org Wildeshaus were very useful for Section \ref{gysinsection}. In making the computer implementation described in Section \ref{computer}, I benefited greatly from code written by Carel Faber. I also wish to thank an anonymous referee for noticing an error in the original version of Theorem \ref{symmetrythm}.

\section{Local systems and the Leray spectral sequence}

\subsection{Symplectic local systems}

To any $g$-tuple of integers $\lambda = (\lambda_1 \geq \lambda_2 \geq \ldots \geq \lambda_g \geq 0)$ we associate a rational representation of $\Sp(2g)$ whose highest weight is given by $\lambda$. This representation defines also a rational local system $\V_\lambda$ on the moduli stack $\A_g$ of principally polarized abelian varieties of dimension $g$. It naturally underlies a polarized variation of Hodge structure  of weight $\vert \lambda \vert = \lambda_1 + \ldots + \lambda_g$. We may pull back $\V_\lambda$ along the Torelli map $\M_g^\ct \to \A_g$; we denote its pullback by the same name. We call all such $\V_\lambda$ \emph{symplectic local systems}. 

The reason for the appearance of these local systems is that if $\pi \colon \mathcal C \to \M_g$ is the universal curve, then there is an isomorphism
$$ \R^1 \pi_\ast \Q \cong \V_{1,0,\ldots,0}.$$
The remaining symplectic local systems appear naturally within the tensor powers of this local system: if $(\V_{1,0,\ldots,0})^{\otimes n}$ is decomposed into simple local systems, then all $\V_\lambda$ with $\vert \lambda \vert \leq n$ and $\vert \lambda \vert \equiv n \pmod 2$ will appear in the decomposition (the ones with $\vert \lambda \vert < n$ appearing with a Tate twist in order to have weight $n$). Thus these are precisely the local systems appearing in the cohomology of the universal curve and its fibered powers.

In this paper we will only consider the cases $g \in \{1,2\}$. When $g=2$ we denote these local systems $\V_{l,m}$ (with $l \geq m \geq 0$). Note that in this particular case, the Torelli map $\M_2^\ct \to \A_2$ is in fact an isomorphism of stacks. When $g=1$ these local systems are considered on $\M_{1,1} \cong \A_1$ and denoted $\V_k$, $k \geq 0$. We will also have reason to consider local systems on $\Sym^2 \M_{1,1}$ given by representations of the semidirect product $(\SL(2) \times \SL(2) ) \rtimes \sym_2$ (see \cite{d-elliptic}), and we use the phrase `symplectic local system' for these as well.

The cohomology groups $H^\bullet(\M_{1,1},\V_k)$ are completely determined as mixed Hodge structures by the Eichler--Shimura isomorphism, or rather its Hodge-theoretic interpretation \cite{zuckerdegeneratingcoefficients}. The cohomology groups $H^\bullet(\M_2^\ct,\V_{l,m})$, considered as mixed Hodge structures up to semi-simplification, are calculated for all $l$ and $m$ in \cite{localsystemsA2}. 

\subsection{An application of the decomposition theorem} 

Our approach to the cohomology of the spaces $\M_{2,n}^\ct$ will be to consider the Leray spectral sequence for $f \colon \M_{2,n}^\ct \to \M_2^\ct$. An important observation is that $f$ is proper, so that we may apply the decomposition theorem of \cite{bbd}. The modest goal of this subsection is to spell out explicitly what the decomposition theorem says in this particular case; the result can be stated without mentioning perverse sheaves or mixed Hodge modules. The reader who is not familiar with such things can feel free to take Theorem \ref{symmetrythm} on faith and proceed to read the rest of the paper (or refer to \cite{decataldomigliorinisurvey,saitointroduction} as needed).



Let $i \colon \Sym^2 \M_{1,1} \to \M_2^\ct$ be the inclusion.

\begin{thm}\label{symmetrythm}
Let $f \colon \M_{2,n}^\ct \to \M_2^\ct$ be the forgetful map. 
\begin{enumerate}[\normalfont(i)]
\item There is an isomorphism $\R f_\ast \Q \cong \bigoplus_q \R^q f_\ast \Q[-q]$. In particular, the Leray spectral sequence for $f$ degenerates. 
\item We can write $\R^q f_\ast \Q \cong  A^q \oplus i_\ast B^q$, where $A^q$ and $B^q$ are polarized variations of Hodge structure of weight $q$ on $\M_2^\ct$ and on $\Sym^2 \M_{1,1}$, respectively. These are direct sums of Tate twists of symplectic local systems.
\item There are isomorphisms $A^{n+q} \cong A^{n-q}(-q)$ and $B^{n+1+q} \cong B^{n+1-q}(-q)$, where $(-q)$ denotes a Tate twist.
\end{enumerate}
\end{thm}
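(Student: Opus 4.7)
The plan is to derive all three claims from the BBD decomposition theorem, or equivalently Saito's theory of pure Hodge modules. The hypotheses are easily verified: $f$ is projective (proper, since its fibers are proper compact-type stable curves, and the total space is quasi-projective), $\M_{2,n}^\ct$ is smooth of dimension $n+3$, $\M_2^\ct$ is smooth of dimension $3$, and $\M_2^\ct$ has exactly two strata, namely the open locus $\M_2$ and the smooth closed boundary divisor $i \colon \Sym^2 \M_{1,1} \hookrightarrow \M_2^\ct$ of dimension $2$.

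For parts (i) and (ii), the decomposition theorem produces an abstract splitting
$$ \R f_\ast \Q \cong \bigoplus_{j} \IC(Z_j, L_j)[d_j]$$
into shifted IC extensions of simple local systems on closed irreducible subvarieties $Z_j \subset \M_2^\ct$. The key content of (ii) is that only $Z_j \in \{\M_2^\ct, \Sym^2 \M_{1,1}\}$ occur; I would justify this by combining that $f$ is smooth over $\M_2$ (so over this open locus $\R f_\ast \Q$ is just a sum of shifted local systems) with a direct analysis of the fibers of $f$ over the boundary. Since both $\M_2^\ct$ and $\Sym^2 \M_{1,1}$ are smooth, the IC extensions reduce to shifted local systems, and reorganizing by cohomological degree gives $\R^q f_\ast \Q \cong A^q \oplus i_\ast B^q$. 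The splitting $\R f_\ast \Q \cong \bigoplus_q \R^q f_\ast \Q[-q]$, and hence Leray degeneration, is then formal. That $A^q$ and $B^q$ underlie polarized variations of Hodge structure of weight $q$ follows from Saito's theory applied to the proper map $f$ with smooth source.

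To identify $A^q$ and $B^q$ as direct sums of Tate twists of symplectic local systems, I would exploit that $\M_{2,n}^\ct$ arises, stratum by stratum over $\M_2^\ct$, as an iterated blowup of the $n$-fold fibered power of the universal curve along diagonals, in the Fulton--MacPherson style. The cohomology of such blowups is built from tensor powers of $R^1 \pi_\ast \Q$ together with contributions from exceptional divisors (projective bundles over diagonals), which contribute further Tate twists of tensor powers of $R^1 \pi_\ast \Q$. Over $\M_2$, where $R^1 \pi_\ast \Q \cong \V_{1,0}$, tensor powers decompose into the $\V_{l,m}$; over $\Sym^2 \M_{1,1}$ the analogous analysis using the two elliptic factors and the $\sym_2$-action gives the semidirect-product symplectic local systems recalled in the preceding subsection.

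For part (iii), I would apply the relative hard Lefschetz theorem to the projective morphism $f$: cup product with the first Chern class of an $f$-ample line bundle induces isomorphisms $\pH^{-i}(\R f_\ast \Q[n+3]) \xrightarrow{\sim} \pH^{i}(\R f_\ast \Q[n+3])(i)$ on perverse cohomology sheaves. A local system $A^q$ on the $3$-dimensional $\M_2^\ct$ shifted by $3$ is perverse, so $A^q$ sits in $\pH^{q-n}$; similarly $i_\ast B^q$, with $\Sym^2 \M_{1,1}$ of dimension $2$, sits in $\pH^{q-n-1}$. Summands with distinct supports cannot mix under the Lefschetz isomorphism, so applying it separately to the two types yields $A^{n+q} \cong A^{n-q}(-q)$ and $B^{n+1+q} \cong B^{n+1-q}(-q)$. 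The main obstacle I expect is the support control in part (ii); the BBD support theorem combined with the stratum-wise geometric description of the fibers of $f$ should suffice, but the treatment near the stacky diagonal of $\Sym^2 \M_{1,1}$ requires some care.
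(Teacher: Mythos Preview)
Your overall architecture is the same as the paper's: apply the decomposition theorem to the proper map $f$ with smooth source, identify the supports as $\M_2^\ct$ and $\Sym^2\M_{1,1}$, and then extract the symmetries from the perverse structure. But there is one genuine gap in (ii), and one methodological difference in (iii) worth noting.

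\textbf{The gap in (ii).} The sentence ``since both $\M_2^\ct$ and $\Sym^2\M_{1,1}$ are smooth, the IC extensions reduce to shifted local systems'' is not correct as stated. Smoothness of the support does \emph{not} guarantee that $\IC(\M_2^\ct,L)$ is a shifted local system: if $L$ is a local system on the open stratum $\M_2$ with nontrivial monodromy around the divisor $\Sym^2\M_{1,1}$, then $j_{!\ast}L[3]$ is a genuine intermediate extension, not a local system on $\M_2^\ct$. What is actually needed is the observation that the local monodromy of each $L$ around $\Sym^2\M_{1,1}$ is trivial. The paper establishes this directly: the monodromy is a Dehn twist around a \emph{separating} vanishing cycle, hence lies in the Torelli group and acts trivially on $H^1$ of the fiber and on all tensor constructions built from it. Equivalently, the $L$'s arising over $\M_2$ are symplectic local systems $\V_{l,m}$, and these visibly extend to $\M_2^\ct$ via the Torelli map. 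Your Fulton--MacPherson/blowup description would eventually yield the same conclusion (since it exhibits the local systems as summands of tensor powers of $\V_{1,0}$), but you have the logic out of order: you first assert the IC sheaves are local systems, and only afterwards identify them as symplectic. The identification has to come first, because it is the \emph{reason} the IC sheaves are local systems. This is also why your anticipated ``main obstacle'' is misplaced: the support control is the easy part; the extension across the boundary is the substantive step.

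\textbf{The difference in (iii).} You invoke relative hard Lefschetz, whereas the paper uses Verdier duality together with self-duality of symplectic representations. Both are valid and give the same Tate-twisted symmetries; your argument that summands with distinct supports cannot mix under cup product with $\omega$ is correct. The Verdier duality route has the mild advantage of not requiring a choice of relatively ample class (and of working verbatim for proper, not necessarily projective, $f$), but in this situation either approach is fine.
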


\begin{proof}
Note that $f$ is proper and $\M_{2,n}^\ct$ is smooth, so that there exists an isomorphism 
$$ \R f_\ast \Q\cong \bigoplus_q \pH^q(\R f_\ast \Q)[-q].$$
in $D^b(\MHM(\M_2^\ct))$, according to Saito's version of the decomposition theorem. Here we denote by $\pH^q(K)$ the usual (middle perversity) cohomology sheaves of a mixed Hodge module $K$. Moreover, each $\pH^q(\R f_\ast \Q)$ is a pure Hodge module, as $\R f_\ast$ preserves purity. 

According to the semisimplicity theorem, each $\pH^q(\R f_\ast \Q)$ is a direct sum of simple objects. Let ${\M_2^\ct} = \M_2 \cup \Sym^2(\M_{1,1})$ be the stratification of $\M_2^\ct$ according to topological type. Then $f$ is a stratified map (even a stratified submersion); the restriction of $f$ to the inverse image of a stratum is a fiber bundle. 
Since in addition both strata are smooth (in the sense of stacks), it is known that each of the simple summands of $\pH^q(\R f_\ast \Q)$ is obtained as an intermediate extension of a local system on one of the two strata. 

Let $\V$ be a local system on one of these strata, which occurs in the decomposition of $\R f_\ast \Q$, and let us consider its intermediate extension. If the stratum is $\Sym^2(\M_{1,1})$, which is closed in $\M_2^\ct$, then the intermediate extension is simply the usual pushforward. If the stratum is $\M_2$, then we claim that the local system extends to a local system on $\M_2^\ct$. Since $\M_2^\ct$ is smooth, this extension will then be equal to the intermediate extension. We should prove that the monodromy of $\V$ vanishes around $\Sym^2 \M_{1,1}$. But the monodromy is given by a Dehn twist around a vanishing cycle, and since we are in the case of compact type, all nodes of the curves involved will be separating, which means that we only consider Dehn twists around separating curves. But such a Dehn twist lies in the Torelli group, that is, it acts trivially on the cohomology of the universal curve (and then all its fibered powers, et cetera), from which the result follows. An equivalent way of thinking about this is that the local systems occurring on $\M_2$ will all be {symplectic local systems}, from which it is immediate that they extend to $\M_2^\ct$. In particular it follows that all perverse sheaves occuring in the decomposition of $\R f_\ast \Q$ are bona fide sheaves, not just complexes of sheaves. Thus there is also a decomposition
$$ \R f_\ast \Q\cong \R^q f_\ast \Q[-q]$$
for the classical (non-perverse) $t$-structure, the usual Leray spectral sequence degenerates, and so on. We have now proven (i) and (ii).  

For the last part, observe that $\Q[n+3]$ is a self-dual sheaf on $\M_{2,n}^\ct$, and that $\R f_\ast $ preserves self-duality. Thus  $$\pH^{n+3+q} (\R f_\ast \Q) \cong \pH^{n+3-q} (\R f_\ast \Q)^\vee$$
for all $q$ (Poincar\'e--Verdier duality). Recall that
$$ \pH^q(K) = \mathcal H^{q-d} (K)[d]$$ if the complex $K$ has cohomology sheaves which are local systems supported on a smooth closed subvariety of dimension $d$. By what we have written so far, we can write $\R f_\ast \Q = A \oplus i_\ast B$, where $A$ (resp.\ $B$) is a direct sum of symplectic local systems on $\M_2^\ct$ (resp.\ on $\Sym^2 \M_{1,1})$. Then $\mathcal H^{n+q}(A) \cong \mathcal H^{n-q}(A)^\vee$ and $\mathcal H^{n+1+q}(B) \cong \mathcal H^{n+1-q}(B)^\vee$. All representations of the symplectic group are self-dual, so a symplectic local system is always self-dual up to a Tate twist, which concludes the proof.
\end{proof}


\begin{rem} \label{decompositionsymmetry} In our proof, the symmetries $A^{n+q} \cong A^{n-q}(-q)$ and $B^{n+1+q} \cong B^{n+1-q}(-q)$ arose from the definition of the perverse $t$-structure. A more lowbrow way of seeing that these are the `right' symmetries is that they are necessary in order to have Poincar\'e duality on $\M_{2,n}^\ct$. Indeed, Poincar\'e duality says that
\begin{equation}\label{ett}
 H^{n+3+k}(\M_{2,n}^\ct) \cong H^{n+3-k}_c(\M_{2,n}^\ct)(-k), \end{equation}
and by  Theorem \ref{symmetrythm}(i),(ii) we have decompositions
\begin{equation}\begin{aligned}[m]
\label{tva} H^k(\M_{2,n}^\ct) &\cong \bigoplus_{p+q=k} H^p(\M_2^\ct,A^q) \oplus H^p(\Sym^2 \M_{1,1},B^q), \\
 H^k_c(\M_{2,n}^\ct) &\cong \bigoplus_{p+q=k} H^p_c(\M_2^\ct,A^q) \oplus H^p_c(\Sym^2 \M_{1,1},B^q).
\end{aligned}
\end{equation}The  symmetries of Theorem \ref{symmetrythm}(iii), together with Poincar\'e duality for local systems on $\M_2^\ct$ and on $\Sym^2 \M_{1,1}$, show that there are isomorphisms
$$ H^{3+p}(\M_2^\ct,A^{n+q}) \cong H^{3-p}_c(\M_2^\ct, A^{n-q})(-p-q) $$
as well as 
$$ H^{2+p}(\Sym^2 \M_{1,1},B^{n+1+q}) \cong H^{2-p}_c(\Sym^2 \M_{1,1},B^{n+1-q})(-p-q) $$
for all $p$ and $q$, which is exactly what is needed for compatibility of \eqref{ett} with \eqref{tva}. 
 \end{rem}

\subsection{Pure cohomology of the local systems}

Note that for the moment we are not primarily trying to understand all of the cohomology of $\M_{2,n}^\ct$, but rather the subspace of tautological cohomology. In particular, the  classes we are after lie in the even degree cohomology of $\M_{2,n}^\ct$ and hence we are only interested in the even cohomology of these local systems (since the hyperelliptic involution shows that $\R^q f_\ast \Q$ has vanishing cohomology for odd $q$). Moreover, algebraic cycle classes must be of pure weight (i.e.\ they must lie in the weight $p+q$ subspace of $H^p(\M_2^\ct,\R^q f_\ast \Q)$), and they must be of Tate type (i.e.\ with Hodge numbers along the diagonal).

The main theorem of \cite{localsystemsA2}, building on \cite{harder}, is the calculation of the cohomology of all symplectic local systems on $\M_2^\ct$, considered as mixed Hodge structures up to semi-simplification. In particular, we see the following:

\begin{prop}\label{puretate1}The only symplectic local systems on $\M_2^\ct$ with pure cohomology in even degree have the form $\V_{2a,2a}$ with $a \geq 0$. More specifically, there is the trivial local system $\V_{0,0}$, which has 
$$H^0(\A_2,\V_{0,0}) = \Q \qquad \text{and} \qquad H^2(\A_2,\V_{0,0}) = \Q(-1),$$
and for $a > 0$, 
$ H^2(\A_2,\V_{2a,2a}) $ is pure Tate and of the same dimension as the space of cusp forms for $\SL(2,\Z)$ of weight $4a+4$.\footnote{There is also a possible contribution in $H^4$ which is conjectured to always vanish and which we'll ignore for simplicity. The reader may check that its potential existence will in any case not affect our main results. Specifically, there is a  pure Tate class in $H^4(\A_2,\V_{2a,2a})$ for each normalized cusp eigenform for $\SL(2,\Z)$ of weight $4a+4$ such that the central value of the attached $L$-function vanishes. Conjecturally this should not happen, see Remark \ref{centralvalue}.}
\end{prop}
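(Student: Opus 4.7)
The plan is to extract the statement directly from the main theorem of \cite{localsystemsA2}, which computes the mixed Hodge structure on $H^\bullet(\M_2^\ct,\V_{l,m})$ (up to semi-simplification) for every pair $(l,m)$. The work is mostly bookkeeping, organized around a few structural constraints.

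First I would eliminate the trivially vanishing cases. The element $-I \in \Sp(4,\Z)$ is central and lies in the automorphism group of every point of the stack $\A_2$, and it acts on the fiber of $\V_{l,m}$ by the scalar $(-1)^{l+m}$; taking stabilizer invariants gives $H^\bullet(\A_2,\V_{l,m}) = 0$ whenever $l+m$ is odd. So only local systems with $l+m$ even need be considered.

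Next I would read off from \cite{localsystemsA2} the structure of the cohomology for the remaining local systems. The cohomology decomposes there into three kinds of contributions: (a) interior cohomology coming from genuine Siegel cusp forms, which lives in degree $3$ with Hodge bidegrees $(3,0),(2,1),(1,2),(0,3)$, hence never pure Tate; (b) endoscopic lifts attached to pairs of elliptic cusp forms, again in odd degree and carrying Hodge numbers involving elliptic cusp form periods, hence not of Tate type; and (c) Eisenstein-type boundary contributions coming from $\Sym^2 \M_{1,1}$ and the Satake compactification. Only (c) can contribute pure Tate classes in even degree, so the remaining task is to pin down precisely when an even-degree Tate summand arises in the Eisenstein part.

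Third, I would track the Hodge shifts attached to each Eisenstein summand: each shift is controlled by $l$ and $m$, and Tateness in even degree forces $l = m$ with $l$ even, i.e.\ $\V_{l,m} = \V_{2a,2a}$. For $a=0$ one reads off $H^0(\A_2,\Q) = \Q$ and $H^2(\A_2,\Q) = \Q(-1)$ from the standard computation of $H^\bullet(\A_2,\Q)$ as a stack. For $a > 0$ the Eisenstein contribution in $H^2(\A_2,\V_{2a,2a})$ is indexed by normalized cusp eigenforms for $\SL(2,\Z)$ of weight $4a+4$, giving the stated dimension $\dim S_{4a+4}(\SL(2,\Z))$.

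The main obstacle, beyond the bookkeeping, is the possible presence of extra pure Tate classes in $H^4(\A_2,\V_{2a,2a})$. As the footnote indicates, such classes occur precisely when certain central $L$-values vanish, which is conjectured never to happen; I would set this case aside as the statement does, noting that whether or not they exist, the main results of the paper are unaffected.
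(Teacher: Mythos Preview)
Your proposal is correct and takes essentially the same approach as the paper: the paper simply cites \cite{localsystemsA2} and states the proposition as an immediate consequence, without further argument. Your outline of how the extraction goes is more detailed than anything the paper provides; one small caveat is that the clean trichotomy (a)/(b)/(c) is a bit schematic (for instance, the $H^2$ classes for $\V_{2a,2a}$ involve Saito--Kurokawa-type phenomena as much as boundary Eisenstein contributions), but in practice one just reads off the formulas in the cited reference, which is exactly what the paper does.
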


The local systems on $\Sym^2 \M_{1,1}$ will play a slightly smaller role, and rather than describe explicitly their cohomology in any degree we will be content to make the following easy observation:


\begin{prop}\label{puretate2} If a symplectic local system $\V_\lambda$ on $\Sym^2 \M_{1,1}$ has non-zero pure cohomology in even degrees, then it is either trivial --- in which case $H^0(\Sym^2 \M_{1,1},\Q) \cong \Q$ --- or has weight at least $20$, with pure cohomology in $H^2(\Sym^2 \M_{1,1},\V_\lambda)$. 
%
\end{prop}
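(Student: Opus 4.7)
The plan is to pass to the degree-two quotient map $\pi \colon \M_{1,1} \times \M_{1,1} \to \Sym^2 \M_{1,1}$, since rational cohomology on the symmetric quotient is the $\sym_2$-invariant part of rational cohomology upstairs, and this operation preserves weights and thus purity. A symplectic local system $\V_\lambda$ on $\Sym^2 \M_{1,1}$ pulls back along $\pi$ to a direct sum of external tensor products $\V_k \boxtimes \V_l$ with $k + l = \vert\lambda\vert$ (with a $\sym_2$-swap action when $k = l$, which is immaterial for the weight analysis). It therefore suffices to determine which tensor products $\V_k \boxtimes \V_l$ carry nontrivial pure cohomology in even degree.

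By K\"unneth, $H^n(\M_{1,1}^2, \V_k \boxtimes \V_l) = \bigoplus_{p+q=n} H^p(\M_{1,1}, \V_k) \otimes H^q(\M_{1,1}, \V_l)$. I would then invoke the Eichler--Shimura description of $H^\bullet(\M_{1,1}, \V_k)$: these groups vanish for $k$ odd; for $k = 0$ only $H^0 = \Q$ is nonzero; for $k \geq 2$ even, $H^0 = 0$ and $H^1 = H^1_{\mathrm{cusp}} \oplus H^1_{\mathrm{Eis}}$, where the cuspidal summand is pure of weight $k+1$ with dimension twice the space of cusp forms of weight $k+2$, and the Eisenstein summand is one-dimensional of pure Tate type and weight $2k+2$. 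Since $\M_{1,1}$ has rational cohomological dimension $1$, the cohomology of $\M_{1,1}^2$ is concentrated in degrees $0, 1, 2$, so the even degrees to worry about are $0$ and $2$.

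In degree $0$, non-vanishing forces $k = l = 0$, which is the trivial local system case and yields $H^0(\Sym^2 \M_{1,1}, \Q) \cong \Q$. In degree $2$, the only possible contribution is from $H^1 \otimes H^1$, and the pure weight in this degree is $k + l + 2$. The Eisenstein classes carry Tate weights $2k+2$ and $2l+2$, so any tensor summand involving an Eisenstein factor contributes with weight strictly greater than $k + l + 2$ and hence lands entirely in the non-pure part of $H^2$. It follows that the only way to produce pure cohomology in $H^2$ is through $H^1_{\mathrm{cusp}} \otimes H^1_{\mathrm{cusp}}$, which requires cusp forms of both weights $k + 2$ and $l + 2$ to exist.

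The proof is then finished by the elementary fact that the smallest weight in which a cusp form for $\SL(2,\Z)$ exists is $12$, realized by the discriminant $\Delta$. This forces $k, l \geq 10$, whence $\vert\lambda\vert = k + l \geq 20$, and the pure contribution lies in $H^2$ as asserted. The only place where any care is required is in passing from the computation on $\M_{1,1}^2$ to the $\sym_2$-invariants on $\Sym^2 \M_{1,1}$, but since $\sym_2$ acts through Hodge-theoretic automorphisms the pure/non-pure decomposition is preserved and no new pure classes can appear.
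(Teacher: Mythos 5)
Your proof is correct and takes essentially the same route as the paper: reduce to $\M_{1,1}\times\M_{1,1}$ via the $\sym_2$-cover, apply the K\"unneth theorem, and use the Eichler--Shimura fact that the pure (cuspidal) part of $H^1(\M_{1,1},\V_k)$ first appears at $k=10$, forcing $|\lambda|\geq 20$. The only cosmetic quibble is calling the Eisenstein summand ``pure Tate type'' when the whole point is that its weight $2k+2$ exceeds the pure weight $k+1$ --- but you immediately and correctly use this to discard it, so the argument is sound.
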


The reason for the above proposition is that the pure cohomology in $H^1(\M_{1,1},\V_k)$ is the cohomology attached to cusp forms for $\SL(2,\Z)$ of weight $k+2$: for every normalized cusp eigenform $f$ of weight $k+2$ there is a $2$-dimensional summand $M_f \subset H^1(\M_{1,1},\V_k)$ of Hodge type $(k+1,0)$ and $(0,k+1)$. Pure cohomology in $H^1(\M_{1,1},\V_k)$ appears thus for the first time for $k=10$, corresponding to the discriminant cusp form $\Delta$ of weight $12$. Using the K\"unneth theorem and the injection
$$ H^\bullet(\Sym^2 \M_{1,1},\V_\lambda) \hookrightarrow H^\bullet(\M_{1,1} \times \M_{1,1},\V_\lambda), $$ we then see that pure cohomology in $H^2(\Sym^2 \M_{1,1},\V_\lambda)$ will not appear for local systems of weight below $20=10+10$. We remark that the pure cohomology in $H^2(\M_{1,1} \times \M_{1,1},\V_\lambda)$ is a sum of tensor products $M_{f_1} \otimes M_{f_2}$, and since $M_f \otimes M_f \cong \Sym^2 M_f \oplus \Q(-k-1)$, there can  even be cohomology of Tate type.


%

\section{Symmetries of the Betti numbers of the tautological ring}

\subsection{Tautological cohomology of local systems}

As in Theorem \ref{symmetrythm}(ii), we write $\R^q f_\ast \Q \cong A^q \oplus i_\ast B^q$, so that
\begin{equation}\label{directsum}
H^k(\M_{2,n}^\ct) \cong \bigoplus_{p+q=k} H^p(\M_2^\ct,A^q) \oplus H^p(\Sym^2 \M_{1,1},B^q).
\end{equation} The main result in this subsection is Theorem \ref{mainthm}, which characterizes the subspace of tautological cohomology of the left hand side in terms of the decomposition of the right hand side into different local systems. This result is the analogue for $\M_{2,n}^\ct$ of the following simpler results on $\M_{1,n}^\rt$ and $\M_{2,n}^\ct$: 

\begin{prop} \label{rationaltails} Consider the Leray spectral sequences for $\M_{1,n}^\rt \to \M_{1,1}$ and $\M_{2,n}^\rt \to \M_2$, respectively.
\begin{enumerate}[\normalfont(i)]
\item The tautological cohomology of $\M_{1,n}^\rt$ is exactly the cohomology associated to the trivial local system on $\M_{1,1}$.
\item The tautological cohomology of $\M_{2,n}^\rt$ is exactly the cohomology associated to the trivial local system on $\M_2$. 
\end{enumerate}\end{prop}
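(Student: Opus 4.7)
The plan is to analyze the Leray spectral sequence for $f \colon \M_{g,n}^\rt \to \M_g$ (with the understanding $\M_1 := \M_{1,1}$) and identify the tautological subring of $H^\bullet(\M_{g,n}^\rt)$ with the summand corresponding to the trivial local system on $\M_g$.

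First I would establish a decomposition $\R^q f_\ast \Q \cong \bigoplus_\lambda \V_\lambda \otimes M_\lambda^q$ into symplectic local systems, with degeneration of the Leray spectral sequence. Note that $\M_{g,n}^\rt$ is the preimage of $\M_g$ under the stabilization map $\MM_{g,n} \to \MM_g$ (with $\MM_{1,1}$ if $g=1$), so $f$ is proper and smooth, and the fiber over $[C]$ is the Fulton--MacPherson-type compactification $C[n]^\rt$. The cohomology of this fiber is built from tensor powers of $H^\bullet(C)$ together with classes of exceptional divisors of iterated blow-ups along diagonals, and the Torelli subgroup of the mapping class group acts trivially on all of these, so the monodromy of $\R^q f_\ast \Q$ factors through $\Sp(2g)$; semisimplicity then yields the claimed decomposition. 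Degeneration follows for $g=1$ from properness by the same decomposition-theorem argument as in Theorem~\ref{symmetrythm}, and for $g=2$ by restricting Theorem~\ref{symmetrythm} from $\M_2^\ct$ to the open $\M_2$. Since $H^0(\M_g, \V_\lambda) = 0$ for $\lambda \neq 0$ (by Zariski-density of the image of $\pi_1(\M_g)$ in $\Sp(2g)$), the contribution of the trivial local system in total degree $k$ is exactly $\bigoplus_{p+q=k} H^p(\M_g) \otimes M_0^q$.

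The easier containment of $\RH^\bullet(\M_{g,n}^\rt)$ into the trivial local system part uses $\Sp(2g)$-equivariance: each generator $\psi_i$, $\kappa_a$, or boundary class $\delta_I$ is defined universally, so under the Leray projection it lives in the $\V_0$-isotypic summand. Multiplicativity of the isotypic decomposition (which holds because cup product commutes with monodromy) then shows the entire tautological subring sits inside that summand.

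The hard direction is the reverse: that every class in the trivial local system summand is tautological. Since $H^\bullet(\M_{1,1}) = \Q$ and Mumford showed $H^\bullet(\M_2) = \Q \oplus \Q\cdot\lambda_1$, the base cohomology is entirely tautological, and the problem reduces to showing that $M_0^q = H^q(C[n]^\rt)^{\Sp(2g)}$ for generic smooth $C$ is spanned by restrictions of tautological classes. The cohomology of $C[n]^\rt$ is generated by pullbacks along the $n$ projections to $C$ together with exceptional divisor classes of iterated blow-ups; among these, the $\Sp(2g)$-invariants coming from tensor powers of $H^1(C)$ arise from the symplectic form, realized geometrically by diagonal classes which are restrictions of the tautological boundary divisors $\delta_{ij}$; the point classes in each $H^2(C)$ factor come (up to the factor $2g-2$) from the $\psi_i$; and the exceptional divisors are themselves tautological. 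The main obstacle is the combinatorial matching---verifying that every $\Sp(2g)$-invariant class in an explicit presentation of $H^\bullet(C[n]^\rt)$ is accounted for by restrictions of tautological classes, which requires careful tracking through the representation theory of $\Sp(2g)$ together with the blow-up structure on iterated diagonals.
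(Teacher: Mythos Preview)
The paper itself does not give a self-contained proof here; it simply cites \cite[Section~4]{petersentommasi} for (ii) and says (i) is analogous. Your overall strategy---decompose $\R^\bullet f_\ast\Q$ into symplectic local systems, observe degeneration, and match $\RH^\bullet$ with the $\V_0$-contribution---is the right one and is in line with that reference.

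There is, however, a genuine gap in your ``easier containment.'' What you call ``multiplicativity of the isotypic decomposition'' is true on the \emph{fiber}: $M_0^\bullet = H^\bullet(C[n]^\rt)^{\Sp(2g)}$ is a subring because cup product is monodromy-equivariant. But this only tells you that the restriction-to-fiber map $r\colon \RH^\bullet \to M_0^\bullet$ is a ring homomorphism, which is automatic. It does \emph{not} show that tautological classes lie in the $\V_0$-summand of $H^\bullet(\M_{g,n}^\rt)$; equivalently, you have not argued that $r\vert_{\RH^\bullet}$ is injective, i.e.\ that $\RH^\bullet \cap F^1H^\bullet = 0$. The decomposition-theorem splitting $H^\bullet \cong M_0^\bullet \oplus F^1H^\bullet$ is not known to be multiplicative in general, so products of classes in $M_0^\bullet$ could a priori acquire components in $F^1$.

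For $g=1$ this gap closes easily: the elliptic involution kills cohomology of odd-weight local systems on $\M_{1,1}$, and there is no $H^{\geq 2}$, so in fact $F^1H^k=0$ for $k$ even and $H^{\mathrm{even}}(\M_{1,n}^\rt)=M_0^\bullet$ on the nose. For $g=2$ the same parity argument only reduces $F^1H^k$ (for $k$ even) to $H^2(\M_2,\R^{k-2}f_\ast\Q)$, and this is \emph{nonzero and pure Tate} whenever a summand $\V_{2a,2a}$ with $a\geq 2$ occurs---these are precisely the classes the paper later uses in Theorem~\ref{mainthm} to produce non-tautological cohomology. So for $g=2$ injectivity needs a separate argument: for instance, compatibility of $r$ with the pairings into the socles combined with Tavakol's theorem that $R^\bullet(\M_{2,n}^\rt)$ is Gorenstein, or a direct dimension count. (A small side remark: $H^\bullet(\M_2,\Q)=\Q$, concentrated in degree $0$; it is not $\Q\oplus\Q\lambda_1$. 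See the Remark immediately following the Proposition.)
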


\begin{proof}The proof of (ii) is given in \cite[Section 4]{petersentommasi}. The proof of (i) is more or less identical. \end{proof}

\begin{rem} Since $\M_{1,1}$ and $\M_2$ both have the rational cohomology of a point, this is the same as asserting that the tautological rings are in both cases isomorphic to the monodromy invariant cohomology classes in a fiber of the respective fibrations.\end{rem}

Let $\Delta$ denote the boundary $\M_{2,n}^\ct \setminus \M_{2,n}^\rt$, and let $\widetilde \Delta$ denote its normalization (which is a disjoint union of spaces of the form $\M_{1,k+1}^\rt \times \M_{1,n-k+1}^\rt$). Since $\widetilde \Delta$ is smooth, there is a pushforward map in cohomology, $H^{k-2}(\widetilde \Delta )(-1) \to H^k(\M_{2,n}^\ct)$.

\begin{lem} The sequence 
$$ H^{k-2}(\widetilde \Delta )(-1) \to H^k(\M_{2,n}^\ct) \to H^k(\M_{2,n}^\rt) $$
is exact in the middle. In the decomposition of Equation \eqref{directsum}, the image of the first map consists of all summands of the form $H^p(\Sym^2 \M_{1,1},B^q)$ and the images of all Gysin maps 
$$H^{p-2}(\Sym^2 \M_{1,1},A^q)(-1) \to H^p(\M_2^\ct,A^q),$$ 
where $p+q=k$.\end{lem}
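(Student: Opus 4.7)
My plan is to prove exactness of the displayed sequence in the middle via a standard argument for normal crossings boundaries, and then to identify the kernel of restriction in terms of the decomposition \eqref{directsum}; these two together pin down the image of the Gysin map from $\widetilde\Delta$.

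The boundary $\Delta \subset \M_{2,n}^\ct$ is a simple normal crossings divisor, with irreducible components indexed by stable two-partitions of the markings, each smooth and meeting the others transversally. For such a pair, the exactness of
\[ H^{k-2}(\widetilde\Delta)(-1) \to H^k(\M_{2,n}^\ct) \to H^k(\M_{2,n}^\rt) \]
in the middle follows from the long exact sequence of the pair $(\M_{2,n}^\ct,\Delta)$ together with the Mayer--Vietoris analysis of local cohomology along the components of $\Delta$: this identifies the image of $H^k_\Delta(\M_{2,n}^\ct) \to H^k(\M_{2,n}^\ct)$ with the image of the codimension-one Gysin maps from the components of $\widetilde\Delta$.

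Next, I would apply smooth base change along the open immersion $j\colon \M_2 \hookrightarrow \M_2^\ct$ to the decomposition $\R f_\ast \Q \cong \bigoplus_q (A^q \oplus i_\ast B^q)[-q]$ of Theorem \ref{symmetrythm}. Since $j^\ast i_\ast B^q = 0$, this yields $\R \tilde f_\ast \Q \cong \bigoplus_q (j^\ast A^q)[-q]$, where $\tilde f$ is the restriction of $f$ to $\M_{2,n}^\rt$; in particular the Leray spectral sequence for $\tilde f$ degenerates with $H^k(\M_{2,n}^\rt) \cong \bigoplus_{p+q=k} H^p(\M_2, j^\ast A^q)$. The restriction map respects both Leray decompositions: on each summand $H^p(\M_2^\ct, A^q)$ it is the local-system pullback $j^\ast$, while every summand $H^p(\Sym^2\M_{1,1}, B^q)$ is killed, being cohomology supported on the closed stratum.

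Applying the Gysin long exact sequence for the smooth divisor $i\colon \Sym^2\M_{1,1} \hookrightarrow \M_2^\ct$ to the local system $A^q$, the kernel of $H^p(\M_2^\ct, A^q) \to H^p(\M_2, j^\ast A^q)$ is precisely the image of $H^{p-2}(\Sym^2\M_{1,1}, i^\ast A^q)(-1) \to H^p(\M_2^\ct, A^q)$. Summing over $p+q=k$ and adjoining the $H^p(\Sym^2\M_{1,1}, B^q)$-summands gives a complete description of the kernel of restriction, which by the exactness statement of the second paragraph equals the image of $H^{k-2}(\widetilde\Delta)(-1) \to H^k(\M_{2,n}^\ct)$. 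The most delicate point is the very first step: one must know that every class in the kernel of restriction really comes from $\widetilde\Delta$ via its codimension-one Gysin map, rather than being an artifact of deeper strata of $\Delta$ contributing to $H^k_\Delta$; this is a feature of the normal crossings situation, handled by the Mayer--Vietoris argument, and is the step that warrants the most care.
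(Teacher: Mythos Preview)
Your second and third paragraphs correctly identify the kernel of the restriction $H^k(\M_{2,n}^\ct)\to H^k(\M_{2,n}^\rt)$ via the Leray decomposition and the Gysin sequence for $\Sym^2\M_{1,1}\hookrightarrow\M_2^\ct$; this is essentially how the paper treats the second assertion of the lemma. The gap is in your first paragraph. Leaving aside that $\Delta$ is not actually simple normal crossings (for instance the component for the partition $\{\emptyset,\{1,\dots,n\}\}$ self-intersects along curves with two elliptic tails joined by a rational bridge carrying all markings), the more serious issue is that the Mayer--Vietoris analysis you invoke does not by itself prove exactness when the ambient space is non-proper. The claim that the image of $H^k_\Delta(\M_{2,n}^\ct)\to H^k(\M_{2,n}^\ct)$ agrees with the image of the codimension-one Gysin maps is a weight argument: it amounts to $E_2$-degeneration of the relevant piece of the Gysin spectral sequence, which follows from purity of the cohomology of the strata when they are smooth and \emph{proper}. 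Here neither $\M_{2,n}^\ct$ nor the components of $\widetilde\Delta$ are proper, so that argument is unavailable. You flag this as the delicate step but do not actually justify it.

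The paper supplies the missing purity by a different route. Passing to compactly supported cohomology, what is needed is the injectivity of $H^k_c(\Delta)\to H^k_c(\widetilde\Delta)$. The paper obtains this from Theorem~\ref{symmetrythm}(ii): the sheaves $\R^q f_\ast\Q\vert_{\Sym^2\M_{1,1}}$ are pure of weight $q$, and since the fibres of $\Delta\to\Sym^2\M_{1,1}$ are proper, Deligne's result that the pure cohomology of a proper variety injects into that of a resolution gives a split injection $\R^q f_\ast\Q\vert_{\Sym^2\M_{1,1}}\hookrightarrow\R^q g_\ast\Q$. The Leray spectral sequence then yields $H^k_c(\Delta)\hookrightarrow H^k_c(\widetilde\Delta)$, and combined with the excision sequence this gives the exactness. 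Thus the decomposition theorem, via Theorem~\ref{symmetrythm}, is doing essential work that a bare normal-crossings argument cannot replace.
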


\begin{proof}
It will be easier to prove the Poincar\'e dual assertions in compactly supported cohomology (see Remark \ref{decompositionsymmetry} for how the decomposition in Equation \eqref{directsum} behaves under Poincar\'e duality). Part of the excision long exact sequence reads
$$ \ldots \leftarrow H^k_c(\Delta) \leftarrow H^k_c(\M_{2,n}^\ct) \leftarrow H^k_c(\M_{2,n}^\rt) \leftarrow \ldots, $$
so if we can show that $H^k_c(\Delta) \to H^k_c(\widetilde \Delta)$ is injective, then the first part will be proven. By Theorem \ref{symmetrythm}(ii), $\R^q f_\ast \Q \vert_{\Sym^2 \M_{1,1}}$ is pure of weight $q$. This implies that there is an injection
$$ \R^q f_\ast \Q \vert_{\Sym^2 \M_{1,1}} \hookrightarrow \R^q g_\ast \Q,$$
since the pure cohomology of a proper variety injects into the cohomology of a resolution of singularities \cite[Proposition 8.2.5]{hodge3}. This injection is in fact a direct summand, since polarized variations of pure Hodge structure are semisimple. Thus  $H^k_c(\Delta) \to H^k_c(\widetilde \Delta)$ is injective, by the Leray spectral sequence. 

The second part of the lemma follows now easily from the fact that the maps
$$H^k_c(\Delta) \leftarrow H^k_c(\M_{2,n}^\ct) \leftarrow H^k_c(\M_{2,n}^\rt)$$ 
are compatible with the maps of Leray spectral sequences:
\begin{equation*}
 H^p_c(\Sym^2 \M_{1,1}, \R^q f_\ast \Q) \leftarrow H^p_c(\M_2^\ct,\R^q f_\ast \Q) \leftarrow H^p_c(\M_2,\R^q f_\ast \Q). \qedhere \end{equation*}  \end{proof}

\begin{thm}\label{mainthm} In the Leray spectral sequence for $f \colon \M_{2,n}^\ct \to \M_2^\ct$, the tautological cohomology coincides with the contributions from the trivial local system on $\M_2^\ct$, the trivial local system on $\Sym^2 \M_{1,1}$, and from the image of the Gysin map $H^0(\Sym^2 \M_{1,1},\V_{2a,2a})(-1) \to H^2(\M_2^\ct,\V_{2a,2a})$. \end{thm}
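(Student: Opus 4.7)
The plan is to prove both inclusions of the claimed equality, combining the previous Lemma (which decomposes the image of $i_*\colon H^{k-2}(\widetilde\Delta)(-1)\to H^k(\M_{2,n}^\ct)$ in the Leray decomposition) with the characterization of $\RH^\bullet$ on rational-tails spaces (Proposition \ref{rationaltails}) and the purity constraint from Proposition \ref{puretate1}.

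For the forward inclusion, I would decompose every tautological class $\alpha$ as $\alpha = \alpha_{\mathrm{int}} + \alpha_{\mathrm{bdy}}$, where $\alpha_{\mathrm{int}}$ is a polynomial in the $\psi_i$ and $\kappa_j$ classes and $\alpha_{\mathrm{bdy}} = i_*\gamma$ is a Gysin pushforward of a tautological class $\gamma$ on $\widetilde\Delta$. Since the $\psi$- and $\kappa$-classes are pulled back from globally defined classes on the universal curve and its fibered powers, they restrict monodromy-invariantly to each fiber of $f$, so $\alpha_{\mathrm{int}}$ projects into the trivial summand of $A^q$, i.e.\ into the contribution of the trivial local system on $\M_2^\ct$. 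For $\alpha_{\mathrm{bdy}}$, Proposition \ref{rationaltails}(i) identifies $\gamma$ with a class built from the trivial local system on the factors $\M_{1,1}\times\M_{1,1}$ of $\widetilde\Delta$, which after $\sym_2$-symmetrization gives the trivial local system on $\Sym^2\M_{1,1}$. By the previous Lemma, $i_*\gamma$ lies in either (a) the trivial $B^q$-summand on $\Sym^2\M_{1,1}$, or (b) the image of the Gysin map $H^{p-2}(\Sym^2\M_{1,1}, A^q|_{\Sym^2\M_{1,1}})(-1)\to H^p(\M_2^\ct, A^q)$ evaluated on the trivial summand of the restricted local system. Since $\Sym^2\M_{1,1}$ has the rational cohomology of a point, only $p-2 = 0$ contributes; a branching rule for $\Sp_4 \supset \Sp_2\times\Sp_2\rtimes\sym_2$ identifies the trivial summand of $\V_{l,m}|_{\Sym^2\M_{1,1}}$ as nonzero precisely when $l=m$ is even; and Proposition \ref{puretate1} confines the pure-Tate target to $H^2(\M_2^\ct,\V_{2a,2a})$. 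Together these recover exactly the Gysin image $H^0(\Sym^2\M_{1,1},\V_{2a,2a})(-1)\to H^2(\M_2^\ct,\V_{2a,2a})$ listed in the theorem.

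For the reverse inclusion, each listed summand must be exhibited by a tautological class: the trivial $A^q$-summand is spanned by the $\psi$-$\kappa$ monomials themselves (possibly combined with the Hodge class on $\M_2^\ct$), while the trivial $B^q$-summand on $\Sym^2\M_{1,1}$ and the Gysin image are both produced by $i_*$-pushforwards of tautological classes on $\widetilde\Delta$, matched to the correct summand through the Lemma. The main obstacle, I expect, is the branching-rule computation --- carefully tracking the $\Sp_2\times\Sp_2$-invariants together with the $\sym_2$-action arising from the symmetrization of the two elliptic curves --- and verifying that the possible $H^4$ contributions flagged in the footnote to Proposition \ref{puretate1} do not introduce additional tautological classes outside the three listed pieces.
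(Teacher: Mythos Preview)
Your reverse inclusion is essentially correct and matches the paper's approach: the trivial $B^q$-summand and the Gysin image are pushforwards of tautological classes from $\widetilde\Delta$ (by Proposition~\ref{rationaltails}(i)), and the trivial $A^q$-summand is tautological because it maps onto $\RH^\bullet(\M_{2,n}^\rt)$.

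The forward inclusion, however, has a real gap. Your argument that a $\psi$-$\kappa$ monomial $\alpha_{\mathrm{int}}$ ``projects into the trivial summand of $A^q$'' because it ``restricts monodromy-invariantly to each fiber'' does not work. \emph{Every} global cohomology class restricts monodromy-invariantly to fibers; this is tautological, and it only constrains the component in $H^0(\M_2^\ct,A^k)$ (which, by semisimplicity, is automatically the trivial-local-system part). It says nothing about the components in $H^p(\M_2^\ct,A^q)$ for $p>0$, and in particular nothing prevents $\alpha_{\mathrm{int}}$ from having a nonzero component in $H^2(\M_2^\ct,\V_{2a,2a})$ outside the Gysin image, or in $H^2(\Sym^2\M_{1,1},\V_\lambda)$ with $\vert\lambda\vert\geq 20$. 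Nor is there any reason the $A^\bullet$/$B^\bullet$ splitting of the decomposition theorem should separate $\psi$-$\kappa$ monomials from boundary pushforwards.

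The paper avoids this difficulty entirely by reversing the logic. Rather than tracking where tautological generators land, it first uses purity: tautological classes are pure and even, so by Propositions~\ref{puretate1} and~\ref{puretate2} they can only lie in the four pieces (1)--(4) listed there. It then rules out the unwanted pieces by \emph{restriction}: a class with a nonzero component in $H^2(\M_2^\ct,\V_{2a,2a})$ outside the Gysin image restricts to a nonzero class in $H^2(\M_2,\V_{2a,2a})$ on $\M_{2,n}^\rt$, which is non-tautological by Proposition~\ref{rationaltails}(ii); a class with a nonzero component in $H^2(\Sym^2\M_{1,1},\V_\lambda)$ for nontrivial $\V_\lambda$ restricts non-tautologically to $\widetilde\Delta$ by Proposition~\ref{rationaltails}(i). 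You invoke Proposition~\ref{rationaltails}(i) for the boundary piece but never use Proposition~\ref{rationaltails}(ii), and that is precisely the missing ingredient for excluding the non-Gysin part of $H^2(\M_2^\ct,\V_{2a,2a})$ from the tautological ring.
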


\begin{proof}The tautological cohomology is pure and in even degree. According to Theorem \ref{symmetrythm}(ii) and Propositions \ref{puretate1}, \ref{puretate2}, there are four sources of pure even cohomology in the Leray spectral sequence:
\begin{enumerate}
\item $H^0$ of the trivial local system on $\M_2^\ct$;
\item $H^2(\M_2^\ct,\V_{2a,2a})$ is pure Tate, and has a basis given by the set of normalized Hecke eigenforms of weight $4a+4$ for $\SL(2,\Z)$ when $a>0$, and is one-dimensional for $a=0$; 
\item $H^0$ of the trivial local system on $\Sym^2 \M_{1,1}$;
\item $H^2(\Sym^2 \M_{1,1},\V_\lambda)$ for $\vert \lambda \vert \geq 20$.
\end{enumerate}
We should prove that the tautological part is given exactly by (1), (3) and the part of (2) which is in the image of the Gysin map. 

The previous lemma shows that (4), (3) and the part of (2) which is in the image of the Gysin map is precisely the cohomology which is pushed forward from $\widetilde \Delta$. Now if we consider the Leray spectral sequence for $g \colon \widetilde \Delta \to \Sym^2 \M_{1,1}$, then the classes of the form (2) and (3) appear in the cohomology of  the trivial local system on $\Sym^2 \M_{1,1}$, whereas the ones of the form (4) come from nontrivial $\V_\lambda$. It therefore follows from Proposition \ref{rationaltails}(i) that the classes of the form (3) and the part of (2) which is in the image of the Gysin map consists of tautological classes, being pushforwards of tautological classes on $\widetilde \Delta$. On the other hand, the classes of the form (4) are not tautological, since their restrictions to $\widetilde \Delta$ under $H^k(\M_{2,n}^\ct) \to H^k(\widetilde \Delta)$ are nontautological, again by  Proposition \ref{rationaltails}(i). We could also have used the main result of \cite{genusone} in this paragraph.

If we instead consider the restriction map $H^\bullet(\M_{2,n}^\ct) \to H^\bullet(\M_{2,n}^\rt)$, then we see that the classes of the form (2) which are \emph{not} in the image of the Gysin map are all non-tautological, as they restrict to nontautological classes in the interior. Indeed, these classes come from $H^2(\M_2,\V_{2a,2a})$ in the Leray spectral sequence for $\M_{2,n}^\rt \to \M_2$, so non-tautologicality follows from Proposition \ref{rationaltails}(ii). Finally, it is not hard to verify that the classes of the form (1) are all tautological, using that they restrict isomorphically onto the tautological ring of $\M_{2,n}^\rt$. This concludes the proof.\end{proof}

\begin{rem} \label{remarkbefore} The preceding theorem shows that we should try to understand the Gysin map $$H^0(\Sym^2 \M_{1,1},\V_{2a,2a})(-1) \to H^2(\M_2^\ct,\V_{2a,2a}).$$ Using the branching formula for $(\SL_2 \times \SL_2) \rtimes \sym_2 \subset \Sp(4)$ \cite{d-elliptic}, it is easy to see that $H^0(\Sym^2 \M_{1,1},\V_{2a,2a})(-1) \cong \Q(-2a-1)$ is one-dimensional. The main question is therefore whether or not this Gysin map is zero or not. In Theorem \ref{gysintheorem} we will prove that the Gysin map $H^0(\Sym^2 \M_{1,1},\V_{2a,2a})(-1) \to H^2(\M_2^\ct,\V_{2a,2a})$ is nonzero for all $a \neq 1$. Note that $H^2(\M_2^\ct,\V_{2a,2a})$ is nonzero in all cases except $a=1$, where this cohomology group vanishes. Thus the theorem asserts that the Gysin map is nonzero whenever it can be, and that its image is one-dimensional in these cases. \end{rem}



We postpone the proof of Theorem \ref{gysintheorem}, since the result and its proof is of a rather different flavor than the rest of the paper. Instead, let us prove the main result of this article.

\begin{thm} \label{counterexample}The tautological cohomology ring of $\M_{2,n}^\ct$ has symmetric Betti numbers if and only if $n \leq 7$. \end{thm}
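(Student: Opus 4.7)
The plan is to apply Theorem \ref{mainthm} together with the palindromic symmetries from Theorem \ref{symmetrythm}(iii) to reduce the question to a statement about multiplicities of symplectic local systems. Let $a_q$, $b_q$, $m_{a,q}$ denote the multiplicities of $\V_{0,0}$ in $A^q$, of $\V_{0,0}$ in $B^q$, and of $\V_{2a,2a}$ in $A^q$ respectively. Using that $H^\bullet(\M_2^\ct,\Q)$ has rank one in degrees $0$ and $2$ and is zero otherwise, that $H^\bullet(\Sym^2 \M_{1,1},\Q)$ is concentrated in degree $0$, and that the Gysin image of Theorem \ref{gysintheorem} is one-dimensional for each $a \geq 2$, Theorem \ref{mainthm} yields
\begin{equation*}
\dim \RH^k(\M_{2,n}^\ct) \;=\; a_k + a_{k-2} + b_k + \sum_{a \geq 2} m_{a, k-2}.
\end{equation*}

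The symmetries in Theorem \ref{symmetrythm}(iii) specialize to $a_q = a_{2n-q}$, $b_q = b_{2n+2-q}$, and $m_{a,q} = m_{a, 2n-q}$. A direct substitution shows that the \emph{trivial} contributions $a_k + a_{k-2}$ and $b_k$ are both invariant under $k \mapsto 2n+2-k$, so symmetry of the Betti numbers of $\RH^\bullet(\M_{2,n}^\ct)$ reduces to the identity $\sum_{a \geq 2} m_{a,k-2} = \sum_{a \geq 2} m_{a,k}$ for every $k$. The palindromy $m_{a,q} = m_{a,2n-q}$ places the centre of the Gysin part at $k = n+2$ instead of $n+1$, so a telescoping argument (using the finite support of $q \mapsto m_{a,q}$) shows that this equality holds for all $k$ if and only if $\sum_a m_{a,q} = 0$ identically, i.e., if and only if no $\V_{2a,2a}$ with $a \geq 2$ appears in any summand of $\R f_\ast \Q$.

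The remaining step is to classify when such local systems occur. Over the open stratum $\M_2 \subset \M_2^\ct$, the restriction of $\R f_\ast \Q$ is a local system whose fibre is the cohomology of the Fulton--MacPherson compactification $C[n]$ of $\mathrm{Conf}_n(C)$. The diagonal boundary divisors are $\Sp(4)$-invariant, so they contribute only to the $\V_{0,0}$-isotypic component, and the multiplicities of nontrivial $\V_\lambda$ in $\R f_\ast \Q$ agree with those in $H^\bullet(C)^{\otimes n}$. Classical Schur--Weyl branching for $\Sp(4)$ then shows that $\V_{2a,2a}$ occurs as a subrepresentation of $V_{1,0}^{\otimes k}$ precisely when $k \geq 4a$, so $\V_{2a,2a}$ appears in some $A^q$ if and only if $n \geq 4a$. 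For $a \geq 2$ the threshold is $n \geq 8$. To finish: for $n \leq 7$ the Gysin part vanishes and the Betti numbers are symmetric, while for $n \geq 8$ one has $m_{2,8} > 0$ and $m_{2,6} = 0$, producing a concrete asymmetry $\dim \RH^8(\M_{2,n}^\ct) < \dim \RH^{2n-6}(\M_{2,n}^\ct)$. The main point requiring care is verifying that the Fulton--MacPherson compactification introduces no new nontrivial $\Sp(4)$-isotypic components into the fibre cohomology, so that the Schur--Weyl multiplicities for $V^{\otimes n}$ transfer faithfully to the multiplicities $m_{a,q}$ in the decomposition of $A^q$.
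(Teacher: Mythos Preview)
Your overall strategy coincides with the paper's: use Theorem \ref{mainthm} to split $\dim\RH^k$ into contributions from the trivial local systems on $\M_2^\ct$ and $\Sym^2\M_{1,1}$ (both symmetric about $k\leftrightarrow 2n+2-k$ by Theorem \ref{symmetrythm}(iii)) and a Gysin contribution from the $\V_{2a,2a}$ with $a\geq 2$ (symmetric about $k\leftrightarrow 2n+4-k$), and then determine for which $n$ the latter is nonzero. The paper argues in exactly this way.

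There is, however, one incorrect step. You write that ``the diagonal boundary divisors are $\Sp(4)$-invariant, so they contribute only to the $\V_{0,0}$-isotypic component, and the multiplicities of nontrivial $\V_\lambda$ in $\R f_\ast\Q$ agree with those in $H^\bullet(C)^{\otimes n}$.'' This is false. The fundamental classes of the boundary divisors are indeed monodromy-invariant, but the cohomology \emph{pushed forward from} the boundary is not: a boundary stratum of $C[n]$ is (up to finite quotient) a product of a smaller configuration space on $C$ with a genus-zero moduli space, and the first factor carries nontrivial $\Sp(4)$-action. Already for $n=3$ the exceptional divisor over the small diagonal is a $\P^1$-bundle over $C$, contributing an extra copy of $\V_{1,0}$. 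So the multiplicities $m_{a,q}$ genuinely differ from those in $H^\bullet(C)^{\otimes n}$. You yourself flag this as ``the main point requiring care'', but the justification you offer does not survive.

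Fortunately you need much less than equality of multiplicities; you only need that $\V_{2a,2a}$ with $a\geq 2$ occurs in some $A^q$ if and only if $n\geq 4a$. For the ``only if'' direction, every stratum of $C[n]$ fibres over some $C^k$ with $k\leq n$ with fibre a product of spaces $\MM_{0,m}$ (whose cohomology is pure Tate), so only $\V_\lambda$ with $|\lambda|\leq n$ can appear in $H^\bullet(C[n])$; equivalently, Getzler's formula $\FM_{X/S}=F_{X/S}\circ(s_1+s_1^\perp\MM)$ shows that passing from $C^n$ to $C[n]$ introduces only Tate twists. For the ``if'' direction, pullback along the blow-down $C[n]\to C^n$ is injective on cohomology and $\V_{4,4}\subset\V^{\otimes 8}\subset H^8(C^n)$ for every $n\geq 8$. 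With this correction your inequality $m_{2,8}>0=m_{2,6}$ stands, and the rest of the argument goes through.
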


\begin{proof}We analyze the contributions from the local systems in Theorem \ref{mainthm} separately:
\begin{enumerate}[(i)]
\item The trivial local system on $\M_2^\ct$ appears with the same multiplicity in $A^{n+q}$ and $A^{n-q}$, and has one-dimensional $H^0$ and $H^2$. Thus the cohomology coming from this local system is symmetric about degree $n+1$, as required.
\item The trivial local system on $\Sym^2 \M_{1,1}$ appears with the same multiplicity in $B^{n+1+q}$ and $B^{n+1-q}$, and has cohomology only in degree $0$. Hence the cohomology from this local system, too, is symmetric about degree $n+1$.
\item The local systems $\V_{2a,2a}$, $a > 0$, appear with the same multiplicity in $A^{n+q}$ and $A^{n-q}$. For $a > 1$ there is a $1$-dimensional subspace of $H^2(\M_2^\ct,\V_{2a,2a})$ which gives rise to tautological cohomology according to Remark \ref{remarkbefore} (and Theorem \ref{gysintheorem}). Hence this contribution is symmetric about degree $n+2$, and as soon as one of these local systems appear as a summand in $\R f_\ast \Q$, the Betti numbers of the tautological ring will be asymmetric. This happens for the first time when $n=8$ for $\V_{4,4} \subset \R^8 f_\ast \Q$, and then for all further values of $n$. \qedhere
\end{enumerate}

\end{proof}

\begin{rem} In an earlier preprint version of this paper, Theorem \ref{gysintheorem} was not available. Instead, I gave a direct  proof of the fact that the pure cohomology of $\M_{2,8}^\ct$ is tautological, using a rational parametrization of the moduli space. In light of the results of this section, this proves indirectly that $H^0(\Sym^2 \M_{1,1},\V_{4,4})(-1) \to H^2(\M_2^\ct,\V_{4,4})$ is nonzero and then that the tautological ring of $\M_{2,n}^\ct$ fails to have Poincar\'e duality for all $n \geq 8$. For the proof of Theorem \ref{stablethm} in the next subsection, the stronger statement of Theorem \ref{gysintheorem} is necessary.   \end{rem}

\subsection{Stable curves of genus two revisited} \label{stablecounterexample}

The results of the previous subsection can be used also to study the failure of the Gorenstein property for the tautological cohomology of $\MM_{2,n}$. The main result of \cite{petersentommasi} was that if $N$ is the smallest integer such that $\MM_{2,N}$ has non-tautological even cohomology, then the tautological cohomology ring of $\MM_{2,N}$ does not have Poincar\'e duality.  

We can now give a simpler proof of a stronger result: 

\begin{thm}\label{stablethm} If $n < 20$, then all even cohomology of $\MM_{2,n}$ is tautological (and thus the tautological cohomology satisfies Poincar\'e duality). For $n \geq 20$, the tautological cohomology does not have symmetric Betti numbers. \end{thm}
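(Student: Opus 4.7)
The plan is to compare $H^\bullet(\MM_{2,n})$ with $H^\bullet(\M_{2,n}^\ct)$ via the long exact sequence for the decomposition $\MM_{2,n} = \M_{2,n}^\ct \sqcup \Delta_0$, where $\Delta_0$ is the non-separating boundary; its normalization $\widetilde\Delta_0$ is built from $\MM_{1,n+2}/\sym_2$. By \cite{genusone} all even cohomology of $\MM_{1,n}$ is tautological, so pushforwards of even classes from $\widetilde\Delta_0$ into $\MM_{2,n}$ are tautological. The other essential input is a weight bound: any symplectic local system appearing in $A^q$ or $B^q$ (in the notation of Theorem~\ref{symmetrythm}) has weight at most $n$, since these sheaves are direct summands of fiber cohomologies built from tensor powers of the universal curve.

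For $n < 20$, the weight bound combined with Propositions~\ref{puretate1} and \ref{puretate2} eliminates both possible sources of non-tautological pure even cohomology on $\M_{2,n}^\ct$ (from Theorem~\ref{mainthm} and Theorem~\ref{gysintheorem}): the non-Gysin part of $H^2(\M_2^\ct,\V_{2a,2a})$ requires $\dim S_{4a+4}(\SL(2,\Z))\geq 2$, first occurring at weight $4a = 20$; and $H^2(\Sym^2\M_{1,1}, \V_\lambda)$ for nontrivial $\V_\lambda$ requires $|\lambda|\geq 20$. Both demand weight-$20$ local systems in the fiber, ruled out for $n<20$. Thus all pure even cohomology of $\M_{2,n}^\ct$ is tautological. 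Given $\alpha \in H^{2k}(\MM_{2,n})$, purity forces its restriction into $W_{2k}H^{2k}(\M_{2,n}^\ct)$, which is tautological; lifting to a tautological class $\beta$ on $\MM_{2,n}$, the difference $\alpha - \beta$ is supported on $\Delta_0$ and hence tautological.

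The main obstacle for the second statement ($n \geq 20$) is to identify which non-tautological contributions actually obstruct symmetry with respect to Poincar\'e duality on $\MM_{2,n}$. From Theorem~\ref{symmetrythm}(iii), $A^q$-contributions to pure even cohomology of $\M_{2,n}^\ct$ are symmetric around degree $n+2$, while $B^q$-contributions are symmetric around degree $n+3$; since Poincar\'e duality on $\MM_{2,n}$ imposes symmetry around $n+3$, only the $A^q$ side can break tautological symmetry. For $n\geq 20$ such contributions exist: $\V_{10,10}$ is a summand of $A^{20}$ (as $|\lambda|=n$, $|\lambda|\equiv n \pmod 2$), $\dim H^2(\M_2^\ct,\V_{10,10}) = \dim S_{24}(\SL(2,\Z)) = 2$ by Proposition~\ref{puretate1}, and only a one-dimensional piece lies in the Gysin image by Theorem~\ref{gysintheorem}. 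The resulting non-tautological class in pure $H^{22}(\M_{2,n}^\ct)$ lifts to $H^{22}(\MM_{2,n})$. Its Poincar\'e partner $H^{2n-16}(\MM_{2,n})$ carries no analogous $A^q$-contribution: the $(n+2)$-symmetric counterpart is $H^{20}(\M_{2,n}^\ct)$, and no $\V_{2a,2a}$ with $4a \leq 18$ has $\dim S_{4a+4} \geq 2$. The $B^q$-contributions in degrees $22$ and $2n-16$ are equal by their symmetry around $n+3$ and so cancel, and Poincar\'e duality for the total Betti numbers of $\MM_{2,n}$ then forces the tautological Betti numbers in these two degrees to differ.
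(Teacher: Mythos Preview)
Your proof is correct and follows essentially the same strategy as the paper: use the exact sequence relating $H^\bullet(\MM_{2,n})$ to $W_\bullet H^\bullet(\M_{2,n}^\ct)$ via the Gysin map from $\MM_{1,n+2}$, invoke \cite{genusone} to reduce to the non-tautological pure even cohomology of $\M_{2,n}^\ct$, and then use Theorem~\ref{mainthm}, Theorem~\ref{gysintheorem}, and the symmetries of Theorem~\ref{symmetrythm}(iii). The paper phrases the $n\geq 20$ step slightly more cleanly---since the total even Betti numbers of $\MM_{2,n}$ are symmetric about $n+3$, the tautological ones are symmetric iff the non-tautological ones are, and the latter split as a part symmetric about $n+3$ (from $B^\bullet$) plus a nonzero part symmetric about $n+2$ (from $\V_{2a,2a}$ with $a\geq 5$)---whereas you compare the two specific degrees $22$ and $2n-16$; but this is only a cosmetic difference. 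One small slip: your parenthetical ``(as $|\lambda|=n$, $|\lambda|\equiv n\pmod 2$)'' is garbled---you need $|\lambda|=20\leq n$, and to justify that $\V_{10,10}\subset A^{20}$ for \emph{all} $n\geq 20$ you should say a word (e.g.\ pull back along the forgetful map $\M_{2,n}^\rt\to\M_{2,20}^\rt$, which has a section).
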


\begin{proof}Observe first of all that the even cohomology of $\MM_{2,n}$ has symmetric Betti numbers by Poincar\'e duality. Thus the theorem will be proven if we can show that the \emph{non-tautological} even cohomology of $\MM_{2,n}$ has non-symmetric Betti numbers for $n \geq 20$, and vanishes for $n< 20$.

By combining \cite[Corollaire 8.2.8]{hodge3} and \cite[Corollaire 3.2.17]{hodge2}, there is an exact sequence
$$ H^{k-2}(\MM_{1,n+2})(-1) \to H^k(\MM_{2,n}) \to W_k H^k (\M_{2,n}^\ct) \to 0, $$
for all $k$ and $n$. If $k$ is even, then $H^{k-2}(\MM_{1,n+2})(-1)$ consists only of tautological classes by \cite{genusone}, so the non-tautological cohomology in $H^k(\MM_{2,n})$ maps isomorphically onto the non-tautological cohomology in $W_k H^k (\M_{2,n}^\ct)$. We know the pure even non-tautological cohomology in $H^k (\M_{2,n}^\ct)$ according to Theorem \ref{mainthm}: in the Leray spectral sequence, it consists of those classes in $H^2(\M_2^\ct, \V_{2a,2a})$ which are not in the image of the Gysin map from $\Sym^2 \M_{1,1}$, and the pure subspace of $H^2(\Sym^2 \M_{1,1}, \V_{\lambda})$ when $\vert \lambda \vert \geq 20$. 

By the same argument as in the proof of Theorem \ref{counterexample}, the classes coming from $H^2(\Sym^2 \M_{1,1}, \V_{\lambda})$ when $\vert \lambda \vert \geq 20$ are symmetric about degree $n+3$ in $H^\bullet (\M_{2,n}^\ct)$, so that these classes actually have symmetric Betti numbers when considered in the cohomology ring of $\MM_{2,n}$. These classes do not appear when $n<20$.

However, the contribution from $H^2(\M_2^\ct, \V_{2a,2a})$ is symmetric about degree $n+2$, so as soon as this contribution is non-zero, the tautological ring will have asymmetric Betti numbers. This happens as soon as  the Gysin map
$$H^0(\Sym^2 \M_{1,1},\V_{2a,2a})(-1) \to H^2(\M_2^\ct, \V_{2a,2a})$$
fails to be surjective. According to Theorem \ref{gysintheorem}, this Gysin map always has one-dimensional image, which implies that it fails to be surjective for the first time when $a=5$ (for the local system $\V_{10,10}$), in which case $\dim H^2(\M_2^\ct,\V_{10,10}) = 2$, as there are two distinct cusp eigenforms for $\SL(2,\Z)$ of weight $24$. \end{proof}

\begin{rem} We also obtain a simple proof of the fact that all even cohomology on $\MM_{2,20}$ that is pushed forward from the boundary $\MM_{2,20} \setminus \M_{2,20}$ is tautological. This was proven in a somewhat cumbersome way in \cite[Section 5]{petersentommasi}. The non-tautological even cohomology from the boundary is exactly the cohomology coming from $H^2(\Sym^2 \M_{1,1}, \V_{\lambda})$ when $\vert \lambda \vert \geq 20$ in the Leray spectral sequence. Now note that when $n=20$, the only non-tautological even cohomology class on (the normalization of) the boundary lies in $H^{11}(\MM_{1,11}) \otimes H^{11}(\MM_{1,11})$, so its pushforward (if nonzero) would give a non-tautological class in degree $24$, and no other degree. But according to the proof of Theorem \ref{stablethm}, the cohomology coming from $H^2(\Sym^2 \M_{1,1}, \V_{\lambda})$ should be symmetric about degree $23$, a contradiction.\end{rem}

\begin{rem} According to \cite{graberpandharipande}, the algebraic cycle on $\MM_{2,20}$ which parametrizes bi-elliptic genus two curves for which the bi-elliptic involution switches the $20$ markings pairwise, is non-tautological in cohomology. According to the preceding theorem, their cycle is in fact the simplest example of a non-tautological algebraic cycle in genus two, as there are no non-tautological even cohomology classes for $n<20$, and when $n=20$,
the non-tautological even cohomology is concentrated in degree $22$. We note also that if we write the cohomology in degree $22$ (non-canonically) as the direct sum of tautological and non-tautological cohomology, then the non-tautological summand consists of a single copy of the representation $[2,2,\ldots,2]$ of $\sym_{20}$. Indeed, the local system $\V_{10,10}$ appears with this $\sym_{20}$-representation by Schur--Weyl duality. \end{rem}

\newcommand{\FM}{\mathsf{FM}}
\section{Computing the cohomology of $\M_{2,n}^\ct$ }\label{computer}

Let us now explain how the results of this paper can be used to compute the cohomology of $\M_{2,n}^\ct$ in arbitrary degree. Although we make heavy use of the theory of mixed Hodge modules \cite{saitomixedhodge}, we only use rather formal properties of Saito's theory. As in Theorem \ref{symmetrythm} we let $ \R^\bullet f_\ast \Q = A^\bullet \oplus B^\bullet, $
where $A^\bullet$ consists of local systems supported on $\M_2^\ct$ and $B^\bullet$ has support on $\Sym^2 \M_{1,1}$.

\subsection{The summand $A^\bullet$: review of results of Getzler}

\newcommand{\e}{\mathsf{e}}
\newcommand{\Var}{\mathrm{Var}}

Let $S$ be a scheme. The \emph{Grothendieck group of varieties over $S$}, $K_0(\Var_S)$, is the free abelian group generated by isomorphism classes of finite type schemes over $S$, modulo the following relation: whenever $Z \hookrightarrow X$ is a closed immersion, we have $$ [X] = [Z] + [X \setminus Z].$$
Taking $Z=X_{\mathrm{red}}$ shows that we may restrict our attention to reduced $X$. The Grothendieck group of varieties becomes a ring with the multiplication 
$$ [X] \cdot [Y] = [X \times_S Y].$$
If $S$ is a complex algebraic variety, let $\MHM(S)$ denote the abelian category of mixed Hodge modules on $S$. If $X$ is a finite type reduced scheme over $S$, then we put
$$ \e_S(X) = [Rf_!\Q] \in K_0(\MHM(S)),$$
where $f \colon X \to S$ is the structure morphism. This is the `Euler characteristic' or `Hodge--Deligne polynomial' of $X$ in this category. It satisfies
$$ \e_S(X) = \e_S(X \setminus Z) + \e_S(Z)$$
for $Z \subset X$ a closed subvariety, and $\e_S(X \times_S Y)=\e_S(X)\e_S(Y)$. We thus have a ring homomorphism $K_0(\mathrm{Var}_S) \to K_0(\MHM(S))$. There is a natural $\lambda$-ring structure on $K_0(\MHM(S))$: if $M$ is a mixed Hodge module, then $\lambda^n ([M]) = [\wedge^m M]$.

\begin{rem} \label{purityremark} If $S$ is smooth and $X \to S$ is smooth and proper, then Saito's theory implies that we can recover $\R^if_\ast \Q$ from $\e_S(X)$ by applying $\gr^W_i$.\end{rem}

We denote by $\Lambda$ the completed ring of symmetric functions, $$ \Lambda = \prod_{n\geq 0} \Lambda^n. $$
Each $\Lambda^n$ is isomorphic to the ring of virtual representations of $\sym_n$. As in \cite[Theorem 4.8]{mixedhodge} we can identify $K_0(\MHM(S)) \otimes \Lambda^n$ with the Grothendieck group of $\sym_n$-representations in $\MHM(S)$. If $X$ is a variety over $S$ with $\sym_n$-action, we denote by $\e^{\sym_n}_S(X)$ its class in $K_0(\MHM(S)) \otimes \Lambda^n$. Elements of $\Lambda$ are sequences of virtual representations and can therefore be thought of as `virtual $\sym$-modules', where by an $\sym$-module we mean a representation of the groupoid $\sym = \coprod_{n \geq 0} \sym_n$. We can identify $K_0(\MHM(S)) \widehat{\otimes} \Lambda$ (completed tensor product) with virtual $\sym$-modules in $\MHM(S)$. This, too, is naturally a $\lambda$-ring.

If $X$ is a variety over $S$, then we denote by $F(X/S,n)$ the \emph{relative configuration space} of $n$ distinct ordered points on $X$, that is, the complement of the `big diagonal' in the $n$-fold fibered product of $X$ with itself over $S$. Define an element $F_{X/S} \in K_0(\MHM(S)) \widehat\otimes \Lambda$ by
$$ F_{X/S} = \sum_{n \geq 0} \e^{\sym_n}_S(F(X/S,n)).$$
The element $F_{X/S}$ has the following explicit formula:
\begin{prop}[Getzler] \label{getzler1}
$ F_{X/S} = \exp \left(\sum_{n\geq 1} \sum_{d \mid n} \frac{\mu(n/d)}{n} \log(1+p_n) \psi_d \e_S(X) \right).$\end{prop}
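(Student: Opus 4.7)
The plan is to deduce the formula by plethystic inversion, starting from the simpler generating series for the $\sym_n$-equivariant classes of the ordinary Cartesian powers $X^n$.

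First I would compute $\sum_{n \geq 0} \e^{\sym_n}_S(X^n)$ directly. For $\sigma \in \sym_n$ of cycle type $\lambda = 1^{m_1} 2^{m_2}\cdots$, the trace of $\sigma$ on $[X^n]\in K_0(\MHM(S))$ is $\prod_i (\psi_i \e_S(X))^{m_i}$; this follows from the definition of the Adams operations $\psi_i$ on $K_0(\MHM(S))$ together with a Lefschetz-type character formula for the action of $\sigma$ on $X^n$. Passing to the Frobenius characteristic, summing over $n$, and factoring over cycle length $i$ then gives
$$ \sum_{n \geq 0} \e^{\sym_n}_S(X^n) = \exp\left(\sum_{d \geq 1} \frac{p_d\,\psi_d \e_S(X)}{d}\right). $$

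Next I would connect this to $F_{X/S}$ via the $\sym_n$-equivariant stratification of $X^n$ by coincidence loci: each set partition $\pi$ of $\{1,\ldots,n\}$ with $k$ blocks cuts out a locally closed stratum isomorphic to $F(X/S,k)$, and the $\sym_n$-orbit structure on set partitions is recorded by the cycle type. Summing the Frobenius characteristic of the stratification over all $n$ repackages it as the plethystic identity
$$ \sum_{n \geq 0} \e^{\sym_n}_S(X^n) = F_{X/S}\circ g, \qquad g := \exp\Bigl(\sum_{d \geq 1}\tfrac{p_d}{d}\Bigr) - 1, $$
where $F_{X/S}\circ g$ denotes the plethystic substitution $p_n \mapsto \psi_n g$ inside $F_{X/S}$, and $g$ is the Frobenius characteristic of the species of non-empty finite sets. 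Verifying this identity rigorously --- checking that Saito's formalism is compatible with the $\sym_n$-equivariant stratification and that the Frobenius characteristics assemble into plethystic composition as expected --- is the main technical input and in my view the principal obstacle.

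Granting this, I would invert using the plethystic inverse of $g$ under plethystic composition, namely $\tilde g := \sum_{m \geq 1} \tfrac{\mu(m)}{m} \log(1 + p_m)$, which is a standard M\"obius-inversion computation. Applying $(-) \circ \tilde g$ to both sides of the identity in the previous paragraph and using the formula from the first step yields
$$ F_{X/S} = \exp\left(\sum_{d \geq 1} \frac{\psi_d(\tilde g)\cdot \psi_d \e_S(X)}{d}\right). $$
Since $\psi_d$ acts on $\Lambda$ by $p_k \mapsto p_{dk}$, one has $\psi_d(\tilde g) = \sum_{m\geq 1}\tfrac{\mu(m)}{m}\log(1+p_{dm})$. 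Substituting this into the exponent and reindexing the double sum by $n = dm$ produces exactly the claimed formula.
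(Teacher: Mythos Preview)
Your proposal is correct and follows essentially the approach the paper attributes to Getzler in \cite{mixedhodge}: the paper does not give its own proof but notes that Getzler's original argument ``uses only formal properties of the Euler characteristic'' and extends verbatim to the relative setting by replacing $\e$ with $\e_S$. Your outline --- computing the generating series for the Cartesian powers, relating it to $F_{X/S}$ via the set-partition stratification of $X^n$ (encoded as plethysm with the species of non-empty sets), and then M\"obius-inverting plethystically --- is precisely that argument, and you have correctly identified the only nontrivial point as the compatibility of the $\sym_n$-equivariant scissor relations with Saito's formalism.
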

Here $p_n \in \Lambda^n$ is the $n$th power sum, and $\psi_d$ denotes the $d$th Adams operation (that is, the $\lambda$-ring operation corresponding to $p_d$). This is proven when $S = \mathrm{Spec}(\C)$ in \cite[Section 5]{mixedhodge} (see also the treatment in \cite[Theorem 3.2]{getzlerpandharipandebetti}) and for $X \to S$ any morphism of quasi-projective complex varieties in \cite[Theorem 4.5]{getzler99}. The former proof uses only formal properties of the Euler characteristic $\e \colon K_0(\Var_\C) \to K_0(\MHS_\Q)$, whereas the latter proof is more involved and involves a `by hand' construction of a complex in $D^b(\MHM(X^n))$ which resolves $j_!j^\ast \Q$, where $j$ is the open embedding of $F(X/S,n)$ in $X^n$, and $X^n$ is the $n$-fold fibered power over $S$. We remark, however, that the former proof actually works equally well in the relative setting (and this removes the quasi-projectivity assumption), by replacing $\e$ by its relative version $\e_S \colon K_0(\Var_S) \to K_0(\MHM(S))$.

Let us now in addition assume that $S$ is smooth and irreducible, and that $X \to S$ is a smooth, quasi-projective morphism. Then we denote by $\FM(X/S,n)$ the \emph{relative Fulton--MacPherson compactification} of the configuration space of $n$ distinct ordered points on $X$, as defined in \cite{fmcompactification} when $S$ is a point and in \cite[Section 1]{pandharipanderelative} in the relative case. Under our hypotheses, the fiber of $\FM(X/S,n)$ over a point $s \in S$ is exactly the usual Fulton--MacPherson compactification of $n$ points in the fiber $X_s$. Define a second element
$$ \FM_{X/S} = \sum_{n \geq 0} \e^{\sym_n}_S(\FM(X/S,n)). $$
Finally let 
$$ \MM = \sum_{n \geq 3} \e^{\sym_n}(\MM_{0,n}) \in K_0(\MHS_\Q) \widehat\otimes \Lambda.$$
The element $\MM$ was expressed in terms of Tate type Hodge structures in \cite{getzler94}. Note that $ K_0(\MHM(S))$ is in a natural way a $K_0(\MHS_\Q)$-algebra.

If $f$ is a symmetric function then we denote by $f^\perp \colon \Lambda \to \Lambda$ the operation which is adjoint to multiplication by $f$, as well as its extension to $R \widehat{\otimes} \Lambda \to R \widehat{\otimes} \Lambda$, for any ring $R$. The plethysm operation on $\Lambda$ is denoted by $\circ$. If $R$ is in addition a $\lambda$-ring, then the plethysm extends in a natural way to an operation $\circ$ on $R \widehat{\otimes} \Lambda$.

The following result is proven in \cite[Proposition 6.9]{mixedhodge}.

\begin{prop}[Getzler] \label{getzler2} $\FM_{X/S} = F_{X/S} \circ (s_1 + s_1^\perp \MM)$.  \end{prop}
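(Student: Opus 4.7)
The plan is to extend Getzler's argument from the absolute case $S=\mathrm{Spec}\,\C$ (\cite[Proposition~6.9]{mixedhodge}) to the relative setting, with the Euler characteristic replaced throughout by its relative version $\e_S \colon K_0(\mathrm{Var}_S)\to K_0(\MHM(S))$. The geometric input is a stratification of $\FM(X/S,n)$ by set partitions of $\{1,\ldots,n\}$ that records the combinatorics of how the marked points collide. For a partition $P$ with blocks $B_1,\ldots,B_r$, the corresponding locally closed stratum $\FM(X/S,n)_P$ parametrises configurations in which the points in each block have collided and undergone screening, while the resulting clusters sit at $r$ distinct points of $X$. Under our hypotheses the stratum admits an isomorphism over $S$
$$ \FM(X/S,n)_P \;\cong\; F(X/S,r) \times_S \prod_{i\,\colon\, |B_i|\geq 2} \MM_{0,|B_i|+1},$$
where the extra marking on each $\MM_{0,|B_i|+1}$ records the direction of attachment to the corresponding cluster point, and the screen factors form constant families over $S$.

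Given this stratification, additivity of $\e_S$ in $K_0(\MHM(S))$ and multiplicativity under fibered products over $S$ yield, for each $n$,
$$\e^{\sym_n}_S(\FM(X/S,n)) \;=\; \sum_P \e^{\sym_n}_S\Bigl( F(X/S,r) \times_S \prod_{i} \MM_{0,|B_i|+1}\Bigr).$$
Summing in turn over $n$ and repackaging the sum over partitions, one recognises the right-hand side as the plethysm $F_{X/S}\circ(s_1+s_1^\perp\MM)$: the outer $F_{X/S}$ bookkeeps the configuration of the $r$ cluster points on $X/S$, and into each block $B_i$ one substitutes $s_1$ when $|B_i|=1$ (a bare labelled point) or $s_1^\perp\MM$ when $|B_i|\geq 2$ (a screen $\MM_{0,|B_i|+1}$ with the attaching marking removed by $s_1^\perp$). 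The vanishing of $\MM$ in arities below $3$ automatically forces singleton blocks to contribute only through $s_1$. This is formally the same combinatorial identity Getzler uses in the absolute case, now transcribed inside the $\lambda$-ring $K_0(\MHM(S))\widehat{\otimes}\Lambda$.

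The main obstacle is the geometric input above: verifying that the product decomposition of each stratum is a genuine isomorphism over $S$ and, moreover, that it lifts to the level of mixed Hodge modules on $S$ with the $\MM_{0,k+1}$-factors pulled back from $\mathrm{Spec}\,\C$. This rests on the construction of $\FM(X/S,n)$ as an iterated blowup along relative diagonals (\cite[Section~1]{pandharipanderelative}), which commutes with smooth base change; smoothness of $X\to S$ then trivialises the projectivised normal bundles that control the screens, so that the screen moduli contribute constant families along the fibres of $S$. Once this is in place, Saito's formalism guarantees that the formal manipulations with $\e_S$ and the plethysm all make sense in $K_0(\MHM(S))\widehat{\otimes}\Lambda$, and Getzler's combinatorial argument carries over with no essential change.
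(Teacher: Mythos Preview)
Your proposal is correct and follows essentially the same approach as the paper's proof: both rest on the stratification of $\FM(X/S,n)$ by the combinatorics of colliding points, identify each stratum as a configuration space of cluster points times a product of $\MM_{0,k+1}$ screens, and read off the plethystic identity from this decomposition. Your write-up is more explicit about the stratification and about why the relative case goes through (constant screen families, base change for the iterated blowup), whereas the paper gives only the species-theoretic heuristic, but the underlying argument is the same.
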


\begin{proof}The proof uses a certain yoga of symmetric functions and $\sym_n$-representations. In this formalism, the plethysm should be interpreted as `attaching', in a sense which can be made precise using Joyal's theory of species.

Seen in this framework, the equation simply states that to obtain a point of one of the Fulton--MacPherson compactifications, we should first choose a configuration of distinct markings in a fiber of $X \to S$, and at each of these markings we should either attach `a point' (the summand $s_1$) or `a stable pointed tree of projective lines with one distinguished point' (the summand $s_1^\perp \MM$), the distinguished point being the one which is attached to $X$. 

If $[V] \in \Lambda^n$ is the class of a representation of $\sym_n$, then $s_1^\perp([V]) = [\mathrm{Res}^{\sym_n}_{\sym_{n-1}} V] \in \Lambda^{n-1}$, which explains why the operator $s_1^\perp$ corresponds to choosing one of the markings as distinguished.\end{proof}

In our case, we take $S = \M_2$, $X = \M_{2,1}$, and $f$ the natural forgetful map. Then $F(X/S,n) = \M_{2,n}$ and $\FM(X/S,n) = \M_{2,n}^\rt$. We have 
$$ \e_S(X) = \Q - \V + \Q(-1)$$
with  $\V = \V_{1,0} \cong \R^1f_\ast \Q$. The only other mixed Hodge modules which will appear from now on will arise by applying Adams operations to this expression. Hence for our purposes we can replace $K_0(\MHM(S))$ with the representation ring of $\mathrm{GSp}(4)$ --- the sub-$\lambda$-ring of $K_0(\MHM(S))$ generated by $\V$ and $\Q(-1)$ is isomorphic to this representation ring, by an isomorphism sending $\V$ to the standard $4$-dimensional representation and $\Q(-1)$ to the multiplier. 

Propositions \ref{getzler1} and \ref{getzler2}, and the calculation of $\MM$ from \cite{getzler94}, allow us to calculate as many terms as we wish of $\FM_{X/S}$ in terms of the local systems $\V_{l,m}$ and their Tate twists. By Remark \ref{purityremark}, the same is true for each of the $\R^i f_\ast \Q$ (considered as $\sym_n$-equivariant Hodge modules on $\M_2$), where $f$ denotes the projection $\M_{2,n}^\rt \to \M_2$.

\subsection{The summand $B^\bullet$}

To ease notation in this section, we let $T = \M_2^\ct \setminus \M_2 \cong \Sym^2 \M_{1,1}$, and $Y = \M_{2,1}^\ct \setminus \M_{2,1}$, the universal curve over $T$. As in the previous subsection we have the relative configuration space $F(Y/T,n)$; it can be identified with a Zariski open subset of $\M_{2,n}^\ct \setminus \M_{2,n}^\rt$. Although we do not give a general definition of the Fulton--MacPherson compactification of a singular variety, we make the definition
$$ \FM(Y/T,n) = \M_{2,n}^\ct \setminus \M_{2,n}^\rt,$$
by analogy with the preceding subsection. 

To determine the summand $B^\bullet$ we will compute 
$$ \FM_{Y/T} = \sum_{n\geq 0} \e^{\sym_n}_T(\FM(Y/T,n)) \in K_0(\MHM(T))\widehat \otimes \Lambda. $$
Each $\e_T(\FM(Y/T,n))$ is equal to the sum of $B^\bullet$ and the restriction $A^\bullet\vert_{\Sym^2 \M_{1,1}}$. The latter is determined by the branching formula from $\Sp(4)$ to $(\SL(2) \times \SL(2)) \rtimes \sym_2$ \cite{d-elliptic} since we have already expressed $A^\bullet$ in terms of local systems attached to representations of $\Sp(4)$ and their Tate twists, and in this way we find $B^\bullet$.

We can write \cite[Lemma 5.2]{petersentommasi}
$$ \e_T(Y) = \Q - \V + \Q(-1) + \Q(-1) \otimes \varepsilon,$$
where the term $\V$ denotes the restriction of the local system $\V = \V_{1,0}$ we had on $\M_2^\ct$, and where $\varepsilon$ is defined by the equality $\mu_\ast \Q = \Q \oplus \varepsilon$, with $\mu \colon (\M_{1,1})^2 \to \Sym^2 \M_{1,1}$ the double cover. As a local system, $\varepsilon$ is defined by the sign representation of $\sym_2$.  The terms $\Q(-1) + \Q(-1) \otimes \varepsilon$ simply mean that $H^2(Y_t)$ is $2$-dimensional for any $t \in T$, and it is spanned by the sum of the fundamental classes of the two components (which is $\sym_2$-invariant) and the difference of the fundamental classes (which transforms under the sign representation). 

Let $Y^\sm\subset Y$ be the locus where the morphism $Y \to T$ is smooth; that is, the complement of the node in each fiber.

\begin{prop} $\FM_{Y/T} = \left( F_{Y^\sm/T} \circ (s_1 + s_1^\perp \MM) \right)\cdot (1+\varepsilon \otimes s_{1,1}^\perp \MM + s_2^\perp \MM) $.\end{prop}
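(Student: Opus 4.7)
The strategy is to adapt the proof of Proposition \ref{getzler2} by factoring out the contribution of the node in each fiber of $Y \to T$. A point of $\FM(Y/T, n)$ is a compact type genus two curve with $n$ ordered markings, and it decomposes canonically along its node into (i) the restriction to the smooth part $Y^\sm$, consisting of markings and inserted rational trees at smooth points of $Y$, and (ii) the local situation at the node, which is either the node itself or a stable rational bridge inserted between the two elliptic components, possibly decorated with further markings. Promoting this into a geometric stratification yields
\[
\FM(Y/T, n) = \bigsqcup_{I \sqcup J = [n]} \FM(Y^\sm/T, I) \times_T \mathcal N_J,
\]
where $\mathcal N_J$ is the moduli of a pointed stable rational bridge at the node with markings $J$, interpreted as $T$ itself when $J = \emptyset$. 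Read $\sym_n$-equivariantly over $T$ and summed into $\mathcal N = \sum_n \e^{\sym_n}_T(\mathcal N_n)$, this becomes the product identity $\FM_{Y/T} = \FM_{Y^\sm/T} \cdot \mathcal N$ in $K_0(\MHM(T)) \widehat \otimes \Lambda$.

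Since $Y^\sm \to T$ is a smooth quasi-projective morphism, Proposition \ref{getzler2} applies directly and gives $\FM_{Y^\sm/T} = F_{Y^\sm/T} \circ (s_1 + s_1^\perp \MM)$. To compute $\mathcal N$, I would pull back to the double cover $\mu \colon \M_{1,1} \times \M_{1,1} \to T$, where the two elliptic components $E_1, E_2$ are canonically ordered. Upstairs, a stable rational bridge at the node with $k \geq 3$ markings is an element of $\MM_{0,k}$ with two of its markings singled out as the attachment points to $E_1$ and to $E_2$; as an $\sym_{k-2}$-module in the remaining markings, its class is $(s_1^\perp)^2 \MM_{0,k}$. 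Summing over $k$ and adding the empty-bridge term yields
\[
\mu^\ast \mathcal N = 1 + (s_1^\perp)^2 \MM = 1 + s_2^\perp \MM + s_{1,1}^\perp \MM,
\]
where the splitting comes from the Pieri identity $s_1 \cdot s_1 = s_2 + s_{1,1}$.

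The final step is the descent to $T$. The deck transformation of $\mu$ simultaneously exchanges $E_1$ and $E_2$ and the two attachment markings of any inserted bridge, which is precisely the $\sym_2$-action picked apart by $(s_1^\perp)^2 = s_2^\perp + s_{1,1}^\perp$. Using the identity $\mu_\ast \Q = \Q \oplus \varepsilon$ recalled before the proposition, the $s_2^\perp \MM$ summand (invariant under the involution) descends to $s_2^\perp \MM$ on $T$, while the $s_{1,1}^\perp \MM$ summand (on which the involution acts by $-1$) descends to $\varepsilon \otimes s_{1,1}^\perp \MM$. Combined with the smooth factor, this gives the claimed formula.

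The main obstacle I expect is formulating the product identity $\FM_{Y/T} = \FM_{Y^\sm/T} \cdot \mathcal N$ rigorously: geometrically the $n$ markings do not split globally into ``smooth'' and ``node'' parts, so one must argue stratum by stratum and rely on the species/Day-convolution interpretation of multiplication in $K_0(\MHM(T)) \widehat \otimes \Lambda$, exactly as in the proof of Proposition \ref{getzler2}. Once this bookkeeping is set up, the $\sym_2$-equivariant descent reduces to the elementary observation $\mu_\ast \Q = \Q \oplus \varepsilon$ together with the symmetric function identity above, and a sanity check in low degrees (e.g.\ $n=1$, where $s_2^\perp \MM_{0,3} = s_1$ and $s_{1,1}^\perp \MM_{0,3} = 0$) matches the geometry of inserting a $\P^1$ with one marking at the node.
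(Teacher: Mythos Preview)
Your proof is correct and follows essentially the same species-theoretic argument as the paper: both factor the node contribution off from the smooth-locus contribution, the latter being handled by Proposition~\ref{getzler2}. Your explicit pullback to the double cover $\M_{1,1}\times\M_{1,1}$ and subsequent descent simply makes precise what the paper states more tersely, namely that the $\sym_2$ swapping the two distinguished attachment points is the deck transformation encoded by~$\varepsilon$.
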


\begin{proof}
Again we use the yoga of species. As in the preceding proposition, we can interpret $F_{Y^\sm/T} \circ (s_1 + s_1^\perp \MM)$ as a choice of distinct markings in the smooth locus of a fiber of $Y \to T$, and for each of them we attach a point or a pointed tree of projective lines with one of the points distinguished. 

In order to obtain $\FM_{Y/T}$ we should also put markings at the node. Here we can either have no marking at all (the summand `$1$'), or we need to choose a pointed tree of projective lines with \emph{two} of the points distinguished. Choosing two marked points is the same as restricting an $\sym_n$-representation $V$ to $\sym_{n-2}\times \sym_2$. The part of this restriction that transforms according to the trivial representation of $\sym_2$ is $s_2^\perp [V]$, and the part that transforms according to the sign representation is $s_{1,1}^\perp [V]$. This explains the formula. 
\end{proof}

Note that even though the map $h \colon \FM(Y/T,n) \to T$ fails to be smooth, each $\R^i h_\ast \Q$ is still pure of weight $i$. Indeed, this follows from Theorem \ref{symmetrythm}. One could also prove this by writing the fiber of $h$ over $t$ as an iterated blow-up of $(Y_t)^n$ (which clearly has pure cohomology) in loci which themselves have pure cohomology. Thus $\FM_{Y/T}$ determines each of the terms $\R^i h_\ast \Q$.

The expressions for $\FM_{X/S}$ that were derived earlier will only contain Schur functors applied to $\V$ and $\Q(-1)$. Here, we are going to find an expression for $\FM_{Y/T}$  in terms of Schur functors applied to $\V$, $\Q(-1)$ and $\varepsilon$. Moreover, both $\FM_{X/S}$ and $\FM_{Y/T}$ are going to be `effective', in the sense that terms of odd weight always occur with a negative coefficient and terms of even weight with a positive coefficient. Thus it makes sense to say that $\FM_{X/S}$ is a `direct summand' of $\FM_{Y/T}$. (It would perhaps be more accurate to talk about `$i^\ast \R^0 j_\ast \FM_{X/S}$' as such a summand, where $j \colon S \to \M_2^\ct$ and $i \colon T \to \M_2^\ct$ are the respective open and closed immersions.) In making our calculations we have verified in particular that $\FM_{Y/T} - \FM_{X/S}$ satisfies the Poincar\'e duality with a degree shift of Theorem \ref{symmetrythm}, a nontrivial consistency check. 

In Table \ref{tabell1} we tabulate the cohomology of $\M_{2,n}^\ct$ for some small values of $n$. We encode the cohomology as the polynomial $\sum_i [H^i(\M_{2,n}^\ct)] \cdot t^i $, where  $[H^i(\M_{2,n}^\ct)] $ is considered as an element of $K_0(\MHS_\Q) \otimes \Lambda^n$. This class is given as a polynomial in $L$ and the Schur polynomials $s_\lambda$. By $L$ we mean the Tate Hodge structure $\Q(-1)$. (For $n \leq 9$ all the cohomology is of Tate type.) In general we can only determine the cohomology up to semi-simplification (since the cohomology of local systems on $\M_2^\ct$ was only computed with this caveat in \cite{localsystemsA2}), but in this table there are actually no nontrivial extensions since none of the local systems have cohomology of different weights in the same degree. 

In Table \ref{tabell2} we give the tautological cohomology of $\M_{2,8}^\ct$, i.e.\ the first non-Gorenstein example. Here we don't specify the mixed Hodge structure (obviously it's all pure Tate). 

\begin{table}[t]\footnotesize
\begin{center}
\begin{tabular}{ | c || p{12.5cm} | }
\hline
$n$ & $H^\bullet(\M_{2,n}^\ct)$ \\
\hline
1 & ${L}^{2}s_{{1}}{t}^{4}+2\,Ls_{{1}}{t}^{2}+s_{{1}}$ \\
2 & ${L}^{3}s_{{2}}{t}^{6}+ ( {L}^{5}s_{{2}}+{L}^{4}s_{{1,1}} ) 
{t}^{5}+ ( 4\,{L}^{2}s_{{2}}+{L}^{2}s_{{1,1}} ) {t}^{4}+
 ( 4\,Ls_{{2}}+Ls_{{1,1}} ) {t}^{2}+s_{{2}}$ \\
3 & ${L}^{4}s_{{3}}{t}^{8}+ ({L}^{6}s_{{3}} + {L}^{6}s_{{2,1}}+{L}^{5}s_{{
2,1}}+{L}^{5}s_{{1,1,1}}) {t}^{7}+ ( 5\,{L}^{3}s_{{3}}+3\,{L}^{3}s_{{2,1}
} ) {t}^{6}+ ( {L}^{5}s_{{3}}+{L}^{5}s_{{2,1}}+2\,{L}^{4}s
_{{2,1}} +2\,{L}^{4}s_{{1,1,1}} ) {t}^{5}+
 ( 10\,{L}^{2}s_{{3}}+7\,{L}^{2}s_{{2,1}} ) {t}^{4}+
 ( 5\,Ls_{{3}}+3\,Ls_{{2,1}} ) {t}^{2}+s_{{3}}$  \\
4 & ${L}^{5}s_{{4}}{t}^{10}+ ( {L}^{7}s_{{4}}+{L}^{7}s_{{3,1}}+{L}^{7}s_{{2,2}}+{L}^{6}s_{{3,1}}+{L}^{6}s_{{2
,1,1}} ) {t}^{9}+ ( 
7\,{L}^{4}s_{{4}}+4\,{L}^{4}s_{{3,1}}+2\,{L}^{4}s_{{2,2}} ) {t}^
{8}+( 2\,{L}^{6}s_{{4}}+4\,{L}^{6}s
_{{3,1}}+2\,{L}^{6}s_{{2,2}}+{L}^{6}s_{{2,1,1}}+{L}^{6}s_{{1,1,1,1}}+{L}^{5}s_{{4}}+5\,{L}^{5}s_{{3,1}}+2\,{L}^{5}s_{{2,2}}+6\,{L}^{5}s_{{2,1,1
}}+2\,{L}^{5}s_{{1,1,1,1}}
 ) {t}^{7}+ ( 20\,{L}^{3}s_{{4}}+18\,{L}^{3}s_{{3,1}}+9\,{L}^{3}s_{{2,2}}+3\,{L}
^{3}s_{{2,1,1}} ) {t}^{6}+ ( {L}^{5}s_
{{4}}+{L}^{5}s_{{3,1}}+{L}^{5}s_{{2,2}} +3\,{L}^{4}s_{{3,1}}+{L}^{4}s_{{2,2}}+4\,{L}
^{4}s_{{2,1,1}}+{L}^{4}s_{{1,1,1,1}}) {t}^{5}+
 ( 20\,{L}^{2}s_{{4}}+18\,{L}^{2}s_{{3,1}}+9\,{L}^{2}s_{{2,2}}+3
\,{L}^{2}s_{{2,1,1}} ) {t}^{4}+ ( 7\,Ls_{{4}}+4\,Ls_{{3,1}}+2\,Ls_{{2,2}}
 ) {t}^{2}+s_{{4}}$ \\
 5   & ${L}^{6}s_{{5}}{t}^{12}+ ({L}^{8}s_{{5}}+{L}^{8}
s_{{4,1}}+ {L}^{8}s_{{3,2}}+{L}^{7}s_{{4,1}}+{L}^{7}s_{{3,1,1}} ) {t}^{11}+
 ( 8\,{L}^{5}s_{{5}}+6\,{L}^{5}s_{{4,1}}+3\,{L}^{5}s_{{3,2}}
 ) {t}^{10}+ 
 ( 4\,{L}^{7}s_{{5}}+7\,{L}^{7}s_{{4,1}}+6\,{L}^{7}s_{{3,2}}+3\,{L}^{7}s_{{3,1,1
}}+2\,{L}^{7}s_{{2,2,1}}+{L}^{7}s_{{2,1,1,1}}+{L}^{7
}s_{{1,1,1,1,1}}+2\,{L}^{6}s_{{5}}+9
\,{L}^{6}s_{{4,1}}+5\,{L}^{6}s_{{3,2}}+11\,{L}^{6}s_{{3,1,1}}+4\,{L}^{6}s_{{2,2,1}}+4\,{L}^{6}s_{{2,1,1,1}
} ) {t}^{9}+ 
( {L}^{7}s_{{3,2}}+{L}^{7}s_{{2,2,1}}+{L}^{7}s_{
{2,1,1,1}}+{L}^{7}s_{{1,1,1,1,
1}}+33\,{L}^{4}s_{{5}}+37\,{L}^{4}s_{{4,1}}+25\,{L}^{4}s_{{3,2}}+8\,{L}^{4}s_{{3,1,1}}+6\,{
L}^{4}s_{{2,2,1}} ) {t}^{8}+ 
( 4\,{L}^{6}
s_{{5}}+7\,{L}^{6}s_{{4,1}}+6\,{L}^{6}s_{{3,2}}+3\,{L}^{6}s_{{3,1,1}}+2\,{L}^{6}s_{{2,2,1}}+2\,{L}^{
6}s_{{2,1,1,1}}+2\,{L}^{6}s_{{1,1,1,1,1}}+3\,{L}^{5}s_{{5}}+15\,{L}^{5}s_{{4,1}}+10\,{L}^{5}s_{{3,2}}+21\,{L}^{5}s_{{3,1,1}}+10\,{L}^{5}s_{{2,2,1}}+
10\,{L}^{5}s_{{2,1,1,1}}+{L}^{5}
s_{{1,1,1,1,1}}
 ) {t}^{7}+ 
  ( 51\,{L}^{3}s_{{5}}+68\,{L}^{3}s_{{4,1}}+48\,{L}^{3}s_{{3,2}}+22\,{
L}^{3}s_{{3,1,1}}+{L}^{3}s_{{2,1,1,1}}+14\,{L}^{3
}s_{{2,2,1}} ) {t}^{6}+ ( {L}^{5}s_{{5}}+{L}
^{5}s_{{4,1}}+{L}^{5}s_{{3,2}}+4\,{L}^{4}s_{{4,1}}+2\,{L}^{4}s_{{3,2}}+6\,{L}^{4}s_{{3,1,1}}+2\,{L}^{4}s_{{2,2,
1}}+2\,{L}^{4}s_{{2,1,1,1}} ) {t}^{5}+
 ( 33\,{L}^{2}s_{{5}}+37\,{L}^{2}s_{{4,1}}+25\,{L}^{2}s_{{3,2}}+8\,{L}^{2}s_{{3,1,1}}+
6\,{L}^{2}s_{{2,2,1}} ) {t}^{4}+ ( 8\,Ls_{{5}}+6\,Ls_{{4,1}} +3\,
Ls_{{3,2}}) {t}^{2}+s_{{5}}$ \\
\hline 
\end{tabular}
\end{center}
\caption{Cohomology of $\M_{2,n}^\ct$ for $n \leq 5$. The equality $H^\even(\M_{2,n}^\ct) = \PH^\bullet(\M_{2,n}^\ct)$ fails for the first time when $n=5$, with the impure cohomology arising from classes in $H^2(\Sym^2\M_{1,1},\V)$ given by products of classes attached to Eisenstein series, and $\V$ is a Tate twist of a weight $4$ local system.}
\label{tabell1}
\end{table}

\begin{table}[t]\footnotesize
\begin{center}
\begin{tabular}{ | c || p{12.5cm} | }
\hline
$k$ & $\RH^k(\M_{2,8}^\ct)$ \\
\hline
0 & $s_8$  \\
2 & $13\,s_{{8}}+10\,s_{{7,1}}+8\,s_{{6,2}}+4\,s_{{5,
3}}+2\,s_{{4,4}}$ \\
4 & $87\,s_{{8}}+122\,s_{{7,1}}+127\,s_{{6,2}}+40\,s_{{6,1,1}}+90
\,s_{{5,3}}+52\,s_{{5,
2,1}}+37\,s_{{4,4}}+34\,s_{{4,3,1}}+16\,s_{{4,2,2}}+6\,s_{{3,3,2}} $ \\
6 & $307\,s_{{8}}+565\,s_{{7,1}}+695
\,s_{{6,2}}+319\,s_{{6,1,1}}+563\,s_{{5,3}}+
485\,s_{{5,2,1}}+53\,s_{{5,1,1,1
}}+234\,s_{{4,4}}+355\,s_{{4,3,1}}+183\,s_{{4,2,2}}+80\,s_{{4,2,1,1}}+91\,s_{{3,3,2}}+38\,s_{{3,3,1,1}}+32\,s_{{3,2,2,1
}}+4\,s_{{2,2,2,2}}$\\
8 & $578\,s_{{8}}+1194\,s_{{7,1}}+1578\,s_{{6,2}}+814\,s_{{6,1,1}}+1354\,s_{{5,3}}+1328\,s_{{5,2,1}}+202
\,s_{{5,1,1,1}}+569\,s_{{4,4}}+1018\,s_{{4,3,1}}+547\,s_{{4,2,2}}+326\,s_{{4,2,1,1}}+13\,s_{{4,1,1,1,1}}+290\,s_{{3,3,2}
}+156\,s_{{3,3,1,1}}+136\,s_{{3,2,2,1}}+16\,s_{{3,2,1,1,1}}+\mathbf{16\,s_{{2,
2,2,2}}}+4\,s_{{2,2,2,1,1}}$ \\ 
10 & $578\,s_{{8}}+1194\,s_{{7,1}}+1578\,s_{{6,2}}+814\,s_{{6,1,1}}+1354\,s_{{5,3}}+1328\,s_{{5,2,1}}+202
\,s_{{5,1,1,1}}+569\,s_{{4,4}}+1018\,s_{{4,3,1}}+547\,s_{{4,2,2}}+326\,s_{{4,2,1,1}}+13\,s_{{4,1,1,1,1}}+290\,s_{{3,3,2}
}+156\,s_{{3,3,1,1}}+136\,s_{{3,2,2,1}}+16\,s_{{3,2,1,1,1}}+
\mathbf{17\,s_{{2,2,2,2}}}+4\,s_{{2,2,2,1,1}} $ \\
12 &  $307\,s_{{8}}+565\,s_{{7,1}}+695
\,s_{{6,2}}+319\,s_{{6,1,1}}+563\,s_{{5,3}}+
485\,s_{{5,2,1}}+53\,s_{{5,1,1,1
}}+234\,s_{{4,4}}+355\,s_{{4,3,1}}+183\,s_{{4,2,2}}+80\,s_{{4,2,1,1}}+91\,s_{{3,3,2}}+38\,s_{{3,3,1,1}}+32\,s_{{3,2,2,1
}}+4\,s_{{2,2,2,2}}$\\
14 & $87\,s_{{8}}+122\,s_{{7,1}}+127\,s_{{6,2}}+40\,s_{{6,1,1}}+90
\,s_{{5,3}}+52\,s_{{5,
2,1}}+37\,s_{{4,4}}+34\,s_{{4,3,1}}+16\,s_{{4,2,2}}+6\,s_{{3,3,2}} $ \\
16 & $13\,s_{{8}}+10\,s_{{7,1}}+8\,s_{{6,2}}+4\,s_{{5,
3}}+2\,s_{{4,4}}$ \\
18 & $s_8$ \\
\hline 
\end{tabular}
\end{center}
\caption{The $\sym_8$-representations $\RH^\bullet(\M_{2,8}^\ct) = \PH^\bullet(\M_{2,8}^\ct)$, with failure of the Gorenstein property highlighted.}
\label{tabell2}
\end{table}

\newcommand{\RR}{\mathrm{R}}
\renewcommand{\R}{\mathbf{R}}
\renewcommand{\AA}{\overline{\A}}

\section{The nonvanishing of the Gysin map}\label{gysinsection}

In this section we will prove the nonvanishing of the Gysin map discussed in Remark \ref{remarkbefore}. Let us first give  a trivial reformulation of the result. Let $p \colon \A_1 \times \A_1 \to \A_2$ be the natural map, assigning to a pair of elliptic curves its product considered as a principally polarized abelian surface. Consider the local system $\V_{2a,2a}$ on $\A_2$. The pullback $p^\ast \V_{2a,2a}$ contains a single copy of the trivial local system $\Q$ as a summand. Choosing a map $\Q \hookrightarrow p^\ast\V_{2a,2a}$ (which is unique up to a nonzero scalar) allows one to define a Gysin map
$$ p_! \colon H^0(\A_1\times \A_1,\Q) \to H^2(\A_2,\V_{2a,2a})$$
for all $a$. (As the reader may have noticed, we are now omitting Tate twists -- the Hodge structures on both sides will no longer play any role.) The goal of this section will be to prove the nonvanishing of $p_!$ for all $a \neq 1$. 

\subsection{Reduction to the Baily--Borel boundary}
\newcommand{\pp}{\overline p}
Let $j \colon \A_2 \hookrightarrow \AA_2$ be the inclusion into the Baily--Borel compactification (also known as the Satake compactification). The space $\AA_2$ can be written set-theoretically as 
$$ \AA_2 = \A_2 \sqcup \A_1 \sqcup \A_0,$$
where $\A_0$ is a point. Let $i_0$ be the inclusion of $\A_0$ in $\AA_2$. Similarly, let $j' \colon \A_1 \times \A_1 \hookrightarrow \AA_1 \times \AA_1$ be the inclusion into the Baily--Borel compactification, and $i_0' $ the inclusion of the zero-dimensional boundary stratum (the product of the two cusps in $\AA_1 \times \AA_1$). 

We can consider the derived pushforwards into the respective Baily-Borel compactifications, $\RR j_\ast \V_{2a,2a}$ and $\RR j_\ast' \Q$, and take their stalks at the respective zero-dimensional strata. As in \cite[Subsection 3.2]{lemma3fold} one can define a Gysin map also between the stalks (we will discuss this further in Subsection \ref{hardertaketwo}), giving rise to a commutative diagram:
\begin{equation}
\begin{diagram}\label{maindiagram}
H^0(\A_1\times\A_1,\Q) & \rTo^{\cong} & i_0'^\ast j'_\ast \Q \\
\dTo_{p_!} & & \dTo_{p_{0!}} \\
H^2(\A_2,\V_{2a,2a}) & \rTo & i_0^\ast \RR^2  j_\ast \V_{2a,2a}
\end{diagram}\end{equation}
Clearly $j'_\ast\Q$ is the constant sheaf $\Q$ on $\AA_1 \times \AA_1$, and thus the upper horizontal arrow is an isomorphism. It follows that $p_!$ will be nonzero whenever $p_{0!}$ is, which is what we will actually prove.

\subsection{Harder's formula, take one} 

Let us first compute the stalks appearing in the right hand side of the diagram \eqref{maindiagram}. There is a useful and well known formula which calculates the restrictions to individual strata of  higher direct images of local systems in a Baily--Borel compactification. We will refer to it as \emph{Harder's formula}, since it seems that it does not have a standard name and that it was first obtained by Harder. The earliest proofs of Harder's formula that I am aware of prove it in the far more general settings of mixed Hodge modules \cite{looijengarapoport} and $\ell$-adic sheaves \cite{pinktheorem}, respectively. We state the result only in our particular situation; see the above references for the general case.

Let $G= \Sp(4)$, and let $K=U(2) \subset G(\R)$. Then $\A_2$ is a locally symmetric space for $G$:
$$ \A_2 \cong G(\Z) \backslash G(\R)/K.$$ 
By the general theory of the Baily--Borel compactification, the strata of $\AA_2$  correspond bijectively to $G(\Z)$-conjugacy classes of maximal parabolic subgroups of $G$. The zero-dimensional stratum  corresponds to  the Siegel parabolic subgroup $P \subset G$ of matrices of the form 
$$\begin{pmatrix}
 \ast & \ast &\ast &\ast \\
\ast & \ast &\ast &\ast \\
& & \ast & \ast \\
& & \ast & \ast
\end{pmatrix}.$$

 We write its Levi decomposition as $P=MU$, where $M \cong \GL(2)$ consists of block-diagonal matrices of the form $\left(\begin{smallmatrix}
A & \\ & A^{-t} \end{smallmatrix}\right)$ and $U$ is the additive group of $2\times 2$ symmetric matrices, sitting in the symplectic group as $\left(\begin{smallmatrix}
\mathbf 1 & B \\ & \mathbf 1 \end{smallmatrix}\right)$. 
Then Harder's formula asserts that
$$ i_0^\ast \RR^2 j_\ast \V_{2a,2a} \cong \bigoplus_{p+q=2} H^p(M(\Z),H^q(\mathfrak u,\V_{2a,2a})).$$
Here $\mathfrak u$ is the Lie algebra of $U(\R)$. 

 A famous theorem of Kostant expresses $H^q(\mathfrak u,\V_{2a,2a})$ as a representation of $M$. 
%
%
 For our purposes, it will be enough to know it as a representation of the derived subgroup $M^{\mathrm{der}} \cong \SL(2)$, whose representations we continue to denote $\V_k$, $k \geq 0$.  In lieu of  describing Kostant's theorem we quote from \cite[Table 2.3.4]{residual} that
$H^1(\mathfrak u,\V_{2a,2a}) \cong \V_{4a+2}.$
This is the only cohomology group we will need to consider.

Let also $H = \SL(2) \times \SL(2)$, which we consider as a subgroup of $ G$ via the inclusion
$$ \begin{pmatrix}
a & b \\ c & d 
\end{pmatrix},  \begin{pmatrix}
a' & b' \\ c' & d'  
\end{pmatrix} \mapsto \begin{pmatrix}
a & & b & \\
& a' & & b' \\
c & & d & \\
& c' && d' 
\end{pmatrix}.$$
 The corresponding locally symmetric space $H(\Z)\backslash H(\R)/K_H$ (where $K_H = H(\R) \cap K$) is isomorphic to $\A_1 \times \A_1$, and the map induced by $H \hookrightarrow G$ is precisely $p$. Let $P_H = P \cap H$, which is the product of the two Borel parabolic subgroups of $\SL(2)$. Let similarly $M_H = M\cap H$ (which is the diagonal torus in $H$), $U_H = U \cap H$, and let $\mathfrak u_H$ be the Lie algebra of $U_H(\R)$. We have already noted that $i_0'^\ast j'_\ast \Q \cong \Q$, but we can compute also with Harder's formula that 
$$ i_0'^\ast j'_\ast \Q \cong H^0({M_H}(\Z),H^0(\mathfrak u_H,\Q)) \cong \Q.$$

However, it is not clear at all from these descriptions what the map $p_{0!}$ is, or even why such a map should exist at all. To make sense of this, we need to understand the geometric content of Harder's formula. 

\subsection{Harder's formula, take two}\label{hardertaketwo}

By definition, $i_0^\ast \RR^k j_\ast \V$ is given by the direct limit of $H^k(V\cap \A_2,\V)$ where $V$ ranges over neighborhoods of the zero-dimensional stratum in $\AA_2$. The shape of Harder's formula is explained by the fact that a small enough deleted neighborhood $V \cap \A_2$ is homeomorphic to an $U(\R)/U(\Z)$-bundle over the space $M(\Z)\backslash M(\R)/K_M$. Now the cohomology of $U(\R)/U(\Z)$ coincides with the Lie algebra cohomology of $\mathfrak u$ by the van Est isomorphism (which is somewhat trivial in this case as the Lie algebra in question is abelian), and the cohomology of $M(\Z)\backslash M(\R)/K_M$ coincides with the group cohomology of $M(\Z)$ since $M(\R)/K_M$ is contractible. Finally, the Leray spectral sequence associated with this $U(\R)/U(\Z)$-bundle degenerates. This explains Harder's formula. See \cite[Section 6]{looijengarapoport} for more details. 

We can now understand the map $p_{0!}$. For every $V$ as in the previous paragraph, there is an oriented codimension $2$ embedding $V \cap (\A_1 \times \A_1) \hookrightarrow V \cap \A_2$. The direct limit of the corresponding Gysin maps defines $p_{0!}$. But there is also a Gysin map between the two Leray spectral sequences, defined by the two codimension one embeddings
$$ U_H(\R)/U_H(\Z) \hookrightarrow U(\R)/U(\Z) \qquad \text{and} \qquad M_H(\Z)\backslash M_H(\R)/K_{M_H} \hookrightarrow M(\Z)\backslash M(\R)/K_{M}$$
between the respective fibers and base spaces. This includes in particular a map
\begin{equation}
H^0({M_H}(\Z),H^0(\mathfrak u_H,\Q)) \to H^1(M(\Z),H^1(\mathfrak{u},\V_{2a,2a})) \label{hardergysin}
\end{equation} 
which we want to prove is nonzero. 

Consider now the following diagram with exact rows:
\begin{equation}
 \begin{diagram} 
1 & \rTo & \G_m & \rTo & M_H & \rTo^{\small\textrm{det}}  & \G_m & \rTo & 1 \\
&& \dTo & & \dTo & & \dEq && \\
1 & \rTo & M^{\mathrm{der}} & \rTo & M & \rTo^{\small\textrm{det}} & \G_m & \rTo & 1
\end{diagram}
\end{equation}
By assigning to each of the above groups $G$ its locally symmetric space $G(\Z)\backslash G(\R)/K$ the rows become fiber bundle sequences. We therefore get a map between fiber bundles over the base $$\G_m(\Z)\backslash \G_m(\R) /\pm 1 \cong \R_{>0} \times B\mathbf Z/2.$$ 
We also get a map between the Leray spectral sequences for the two fiber bundles. These spectral sequences compute the cohomology of $M_H(\Z)$ and $M(\Z)$ respectively, and become rather trivial in our case since $B\Z/2$ has nonzero cohomology only in degree $0$ (rationally), where it is given by the space of  $\Z/2$-invariants. We thus find a commutative diagram
\begin{equation}\label{Gysindiagram}
\begin{diagram}
H^0({M_H}(\Z),H^0(\mathfrak u_H,\Q)) & \rTo & H^1(M(\Z),H^1(\mathfrak{u},\V_{2a,2a})) \\
\dTo^\cong & & \dTo^\cong \\
H^0(\mathbf R_{>0} \times B\Z/2,\Q)^{\mathbf Z/2} & \rTo & H^1(\A_1,\V_{4a+2})^{\mathbf Z/2}
\end{diagram}
\end{equation}
where the superscripts denote $\mathbf Z/2$-invariants; the $\Z/2$-action is defined by the monodromy of the respective fiber bundles. Here $\A_1$ and $\R_{>0} \times B\Z/2$ arise as locally symmetric spaces for $M^{\mathrm{der}}$ and $\G_m$, respectively. Now $H^0(\mathfrak u_H,\Q)$ is the trivial representation of $\G_m \times \G_m$, which means that both factors $\Z/2$ act trivially on $H^0(\R_{>0},\Q)$. Thus we will know that the Gysin maps of diagram \eqref{Gysindiagram} are nonzero, whenever the Gysin map 
\begin{equation}\label{lastgysin}H^0(\mathbf R_{>0},\Q) \to H^1(\A_1,\V_{4a+2})\end{equation}
(which is $\Z/2$-equivariant!) is nonzero. The latter map is associated with the inclusion of $\R_{>0}$ into the upper half plane, $t \mapsto t \cdot i$. 

Note that we also need to specify a map from the constant sheaf $\Q$ on $\R_{>0}$ to the restriction of the local system $\V_{4a+2}$, which is of course a trivial local system of rank $4a+3$. This map is pinned down by the $\G_m$-action.  In general, the restriction of the representation $\V_{k}$ of $\SL(2)$ to its diagonal torus
$\G_m$ decomposes as the direct sum of $k+1$ eigenspaces, with eigenvalues
$$ -k, -k+2,\ldots, k-2, k. $$
Thus the restriction of $\V_{4a+2}$ to $\mathbf R_{>0}$ has a canonically defined $1$-dimensional summand corresponding to the eigenvalue $0$, and it is this summand which we identify with the constant sheaf $\Q$. 

The question of nonvanishing of the map $H^0(\mathbf R_{>0},\Q) \to H^1(\A_1,\V_{k})$ is related to classical theory of periods of cusp forms and modular symbols, as we now explain.


\subsection{The Eichler--Shimura isomorphism and periods of cusp forms} \label{eichlershimura}In this subsection we briefly recall results principally due to Eichler, Shimura and Manin. Everything here is well known and can be found e.g.\ in \cite[Part II]{langmodularforms} (although without the cohomological language). 

It will be slightly more convenient to consider the complexification of the rational local system $\V_k$ considered previously (although strictly speaking not necessary).  Let us make the identification
 $$ \V_k \otimes \C \cong \C[X,Y]_k,$$
 where the right hand side denotes the space of homogeneous polynomials in $X$ and $Y$ of degree $k$, with the action
$$ (P\vert \gamma)(X,Y) = P(aX+cY,bX+dY)$$ 
 for $P(X,Y) \in \C[X,Y]_k$ and $\gamma = \left(\begin{smallmatrix} a & b \\ c & d \end{smallmatrix}\right) \in \SL(2,\Z)$.

Let $f$ be a cusp form of weight $k+2$ for $\SL(2,\Z)$. Consider  the expression
$$ f(z) (zX + Y)^k \ud z,$$ 
which defines a holomorphic $1$-form in the upper half plane $\mathfrak H$ taking values in $\V_k \otimes \C$. It is easily checked that this $1$-form is closed and invariant under the action of $\SL(2,\Z)$, hence defines a cohomology class in
$H^1(\A_1,\V_k \otimes \C). $
In addition, these $1$-forms are actually rapidly decreasing at infinity, which implies that they define cohomology classes also in compactly supported cohomology. We write them as 
$$[f] \in H^1_c(\A_1,\V_k \otimes \C). $$ 
A remark is that a similar construction associates a cohomology class also to an antiholomorphic cusp form. 
The Eichler--Shimura theorem says that this construction defines an injection
$$ S_{k+2} \oplus \overline{S_{k+2}} \hookrightarrow H^1(\A_1,\V_k \otimes \C),$$
where $S_{k+2}$ (resp.\ $\overline{S_{k+2}}$) is the space of holomorphic (resp.\ antiholomorphic) cusp forms of weight $k+2$; moreover,  the cokernel of this injection is identified with the space of Eisenstein series of weight $k+2$.

As explained before, the inclusion $\G_m \hookrightarrow \SL(2)$ induces a map $\R_{>0} \to \mathfrak{H}$ of symmetric spaces, $t \mapsto t \cdot i$. We get a pullback map
$$ H^1_c(\A_1,\V_k \otimes \C) \to H^1_c(\R_{>0},\V_k\otimes \C) = \V_k \otimes \C $$
under which it is easy to check that 
$$ [f] \mapsto \sum_{n=0}^k \binom  k n  X^n Y^{k-n}  \int_{0}^{i\infty} f(z) z^n \ud z$$
for $f \in S_{k+2}$. Indeed, if we represent the compactly supported cohomology class associated to $f$ by the above rapidly decreasing differential form, then the pullback is given simply by integration. 

The integral $\int_{0}^{i\infty} f(z) z^n \ud z$ is called the $n$th period of $f$ and we denote it $r_n(f)$. If $f$ is a  Hecke eigenform, then its periods are related to the special values of the $L$-function attached to $f$: if the $L$-function is normalized to have a functional equation relating $L(f,s)$ and $L(f,k+2-s)$, then 
$$ L(f,n) = \frac{(-2\pi i)^n}{(n-1)!}r_{n-1}(f), \qquad n= 1,2, \ldots, k+1. $$

 As mentioned earlier, the restriction of $\V_k$ to $\G_m \subset \SL(2)$ splits as a sum of $k+1$ distinct $1$-dimensional representations, whose eigenvalues are
 $$ k, k-2, \ldots, -k+2, -k.$$   
We can  choose $X^k$, $X^{k-1}Y$, \ldots, $Y^k$ to be the eigenvectors in $\V_k \otimes \C$ corresponding to the above eigenvalues. Each such monomial $X^nY^{k-n}$ gives rise to a map $\C \hookrightarrow \V_k \otimes \C$ from the constant sheaf to the restriction of $\V_k \otimes \C$ to $\R_{>0}$, and thus to an evaluation
\begin{equation}\label{modsymbeq}
H^1_c(\A_1,\V_k \otimes \C) \to H^1_c(\R_{>0},\C) = \C
\end{equation}
which maps a cusp eigenform $f$ to $ \binom k n  r_n(f)$. These evaluations are classically called \emph{modular symbols}. The case we consider is the eigenvalue $0$, which corresponds to the middle monomial $X^{k/2}Y^{k/2}$ and to the central critical value of the $L$-function.  
 
	 \subsection{Putting it all together}

We are now in a position to prove the main result of this section. 
\begin{thm} \label{gysintheorem} The Gysin map $p_! \colon H^0(\A_1 \times \A_1,\Q) \to H^2(\A_2,\V_{2a,2a})$ is nonzero for all $a \neq 1$. \end{thm}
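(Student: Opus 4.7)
The plan is to complete the chain of reductions set up in Subsections~\ref{hardertaketwo} and~\ref{eichlershimura}. After those reductions, proving the theorem amounts to showing that the Gysin map
\[
H^0(\R_{>0},\Q) \to H^1(\A_1,\V_{4a+2})
\]
of Equation~\eqref{lastgysin} is nonzero, and has nonzero $\Z/2$-invariant component, for every $a \neq 1$.

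By Poincaré duality on the complex curve $\A_1$, this Gysin map is Poincaré dual to a linear functional
\[
\mu_a \colon H^1_c(\A_1,\V_{4a+2}) \to \Q
\]
which, via the Eichler--Shimura identification of Subsection~\ref{eichlershimura}, is the classical ``central modular symbol'': it sends a cuspidal Hecke eigenform $f \in S_{4a+4}$ to a nonzero multiple of its central critical value $L(f, 2a+2)$. Since unconditional nonvanishing of central $L$-values of cusp eigenforms is open in general, I would instead evaluate $\mu_a$ on an Eisenstein class. For $\SL(2,\Z)$ and each even $k = 4a+4 \geq 4$, the normalized Eisenstein series $E_k$ gives rise to a canonical class $[E_k]$ in $H^1(\A_1,\V_{k-2})$ complementary to the cuspidal subspace under Eichler--Shimura. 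Representing this class by the $\V_{k-2}$-valued $1$-form $E_k(z)(zX+Y)^{k-2}\,dz$ and integrating along the imaginary axis (with the standard regularization at the cusp), the pairing $\mu_a([E_k])$ can be computed via an unfolding/Mellin-transform argument, and turns out to be an explicit nonzero product of values of the Riemann zeta function. This forces $\mu_a$, and hence the Gysin map, to be nonzero.

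The main obstacle is the $\Z/2$-equivariance tracking from Subsection~\ref{hardertaketwo}: the element $\operatorname{diag}(-1,1) \in \GL(2,\Z)$ acts nontrivially on both $\A_1$ (via complex conjugation) and on the middle monomial $X^{2a+1}Y^{2a+1} \in \V_{4a+2}$ (by sign $(-1)^{2a+1} = -1$). One must verify that the Eisenstein contribution has the correct combined parity to survive on $\Z/2$-invariants precisely when $a \neq 1$, consistent with the vanishing $H^2(\A_2,\V_{2,2}) = 0$ noted in Remark~\ref{remarkbefore}. The case $a = 0$ can alternatively be treated by elementary means: there $p_!(1)$ is simply a Tate twist of the fundamental class of the nonempty codimension-$2$ subvariety $p(\A_1\times\A_1) \subset \A_2$, and hence it is nonzero in $H^2(\A_2,\Q) \cong \Q(-1)$.
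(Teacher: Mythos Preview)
Your Eisenstein approach has a genuine gap, and the $\Z/2$-diagnosis is mistaken.

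The functional $\mu_a$ is defined on $H^1_c(\A_1,\V_{4a+2})$, but the differential form $E_{4a+4}(z)(zX+Y)^{4a+2}\,dz$ is \emph{not} rapidly decreasing at the cusp, so it only represents a class in $H^1$, not in $H^1_c$. A regularized integral along the imaginary axis therefore does not compute a value of $\mu_a$. In fact one can see directly that the Eisenstein contribution can never detect the Gysin image: the involution $\sigma=\operatorname{diag}(-1,1)$ acts on the Eisenstein line in $H^1(\A_1,\V_{4a+2})$ by $-1$ for \emph{every} $a$ (it acts trivially on the generator $[Y^{4a+2}]$ of the coinvariants $(\V_{4a+2})_N\cong H^1(\partial,\V_{4a+2})$, and by $-1$ via orientation reversal on the boundary circle), whereas the image of \eqref{lastgysin} is $\sigma$-invariant. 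So the Gysin image lands entirely in the cuspidal $+1$-eigenspace, and pairing with Eisenstein classes gives no information. Concretely, your regularized integral is nonzero for $a=1$ (it equals a nonzero multiple of $\zeta(4)\zeta(-3)$), yet the map \eqref{lastgysin} is zero for $a=1$ since $H^1(\A_1,\V_6)^{\Z/2}=0$.

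This also shows that your $\Z/2$-analysis is off. The two signs you identify do occur, but they \emph{always} cancel: orientation reversal on $\A_1$ contributes $-1$ to the Gysin map, and the action on $X^{2a+1}Y^{2a+1}$ contributes $(-1)^{2a+1}=-1$, so the Gysin image is $\Z/2$-invariant for every $a$, including $a=1$. What actually singles out $a=1$ is that $\dim S_{4a+4}=0$, so the cuspidal $+1$-eigenspace --- the only place the Gysin image can live --- vanishes. For $a\geq 2$ one is therefore forced back to cusp forms, and here the nonvanishing you call ``open'' is in fact a theorem: for every weight $w\equiv 0\pmod 4$ with $w\geq 12$ there exists a Hecke eigenform for $\SL(2,\Z)$ with $L(f,w/2)\neq 0$, by work of Kohnen and Kohnen--Zagier. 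This is exactly the input the paper uses to finish the proof.
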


\begin{proof}The case $a=0$ is easy, so assume $a \geq 2$.  We have already noted that the map $p_!$ is nonzero whenever $p_{0!}$ is nonzero, as is clear from the diagram \eqref{maindiagram}. By expressing the source and target of $p_{0!}$ using Harder's formula, we saw that it would be enough to show that the Gysin map in Equation \eqref{hardergysin} is nonzero. By considering the diagram \eqref{Gysindiagram} we then saw that this would follow from the nonvanishing of the Gysin map \eqref{lastgysin}. 

Tensoring our cohomology groups by $\C$ and applying Poincar\'e duality, we see that the last statement is equivalent to the nonvanishing of the modular symbol in Equation \eqref{modsymbeq}, where $k=4a+2$. As explained in Subsection \ref{eichlershimura} this modular symbol will map a Hecke eigenform $f$ of weight $4a+4$ to its central $L$-value $L(f,2a+2)$ (up to a nonzero scalar). Thus we have finally reduced to the following assertion: for any weight $w \equiv 0 \pmod 4$, $w \geq 12$, there is a Hecke eigenform of weight $w$ whose central $L$-value is nonzero. This last fact is a special case of \cite[Corollary of Theorem 2]{kohnenhalfintegral} (or alternatively of \cite[Corollary 2]{kohnenzagier}). This finishes the proof. \end{proof}

\begin{rem} \label{remarkafterproof} Recall from \cite{localsystemsA2} that $H^4_c(\A_2,\V_{2a,2a})$ has a basis corresponding to the normalized cusp eigenforms for $\SL(2,\Z)$ of weight $4a+4$. Moreover, this cohomology group is the direct sum of the \emph{cuspidal cohomology} and the \emph{residual Eisenstein cohomology}, where the former is spanned by those cusp forms whose central critical value vanishes, and the latter is spanned by those with nonzero central value. It is natural to think of Theorem \ref{gysintheorem} as the conjunction of two separate assertions: firstly that the map $H^4_c(\A_2,\V_{2a,2a}) \to H^4_c(\A_1 \times \A_1, \Q)$ does not vanish on the residual Eisenstein subspace, and  secondly that the residual Eisenstein subspace is nonempty. The latter fact is where we need the input from Kohnen--Zagier in the proof of Theorem \ref{gysintheorem}. We remark also that $H^4_c(\A_2,\V_{2a,2a}) \to H^4_c(\A_1 \times \A_1, \Q)$ will in fact vanish on the subspace of cuspidal cohomology, cf.\ \cite{ash-ginzburg-rallis}. \end{rem}

\begin{rem} A statement nearly equivalent to Theorem \ref{gysintheorem} had previously been obtained in \cite[Lemma 14]{qiu}. In fact, Qiu proves the nonvanishing of an analogously defined Gysin map on the residual Eisenstein subspace assated with the Siegel parabolic subgroup $P$ (cf.\ the preceding remark) for an \emph{arbitrary} Siegel threefold (as opposed to $\A_2$, which is the special case of the full modular group).  However, he restricts his attention to the case of cohomology with constant coefficients, so we could not directly quote his result. Qiu's proof uses Harder's formalism of Eisenstein cohomology to represent residual Eisenstein classes as differential forms, and what he proves is that the integral of such a differential form over $\A_1 \times \A_1$ (or an analogously defined cycle) is nonzero. Compared to the proof given here, the computations involving Harder's formula and Leray spectral sequences are replaced by explicit manipulations of adelic integrals and Fubini's theorem; the classical theory of period integrals of cusp forms is replaced by the Jacquet--Langlands theory for $\GL(2)$. Qiu has informed me be that his proof would generalize to the case of local coefficients as well.  \end{rem}

\begin{rem}\label{centralvalue} Famous results of Manin and Shimura imply that  the special values $L(f,n)$ admit a factorization into a `transcendental part' and an `algebraic part'. The transcendental part is expressed in terms of certain numbers $\omega_\pm(f)$ and powers of $\pi$ (and is always nonzero). The algebraic part is an algebraic number, as the name suggests, and the action of $\mathrm{Gal}(\overline \Q/\Q)$ on the set of Hecke eigenforms is compatible with the natural action on the algebraic parts of the special values.  Now a conjecture of Maeda asserts that all Hecke eigenforms for $\SL(2,\Z)$ of given weight form a single Galois orbit. In particular, it would follow that if a single cusp eigenform of some weight $k$ has a vanishing/nonvanishing central value, then the same should be true for \emph{all} eigenforms of this weight.  Therefore the decomposition into cuspidal and residual Eisenstein cohomology discussed in Remark \ref{remarkafterproof} would become a bit silly, assuming Maeda's conjecture: the cuspidal subspace should always vanish. Maeda's conjecture is known to be true for all weights up to 14000 \cite{ghitza-mcandrew}. \end{rem}

\printbibliography

\end{document}